\newtheorem{theorem}{Theorem}[section]
\newtheorem{proposition}[theorem]{Proposition}
\newtheorem{corollary}[theorem]{Corollary}
\newtheorem{lemma}[theorem]{Lemma}
\theoremstyle{remark}
\newtheorem*{remark}{Remark}
\newtheorem{definition}[theorem]{Definition}
\title{Seiberg-Witten Equations and Einstein Metrics on Finite Volume 4-Manifolds with Asymptotically Hyperbolic Ends}
\author{Alex Xu\footnote{Email address: \href{mailto:axu@math.columbia.edu}{axu@math.columbia.edu}}}
\date{}
\begin{document}
\maketitle

\begin{abstract}
    We construct infinitely many examples of finite volume 4-manifolds with $T^3$ ends that do not admit any cusped asymptotically hyperbolic Einstein metrics yet satisfy a strict logarithmic version of the Hitchin-Thorpe inequality due to Dai-Wei. This is done by using estimates from Seiberg-Witten theory due to LeBrun as well as a method for constructing solutions to the Seiberg-Witten equations on noncompact manifolds due to Biquard. We also use constructions coming from the $Pin^-(2)$ monopole equations to obtain a larger class of manifolds where these techniques apply. 
\end{abstract}

\section{Introduction}
A Riemannian manifold $(X,g)$ is said to be Einstein if its Ricci curvature is constant multiple of its metric tensor
\begin{equation*}
    Ric_g = \lambda g.
\end{equation*}
The question of which manifolds admit such metrics has been studied extensively \cite{Besse}. Not every 4-manifold admits an Einstein metric. A necessary condition for a closed orientable 4-manifold $X$ to admit Einstein metrics is that it must satisfy the Hitchin-Thorpe inequality \cite{Hitchin74}
\begin{equation*}
    2\chi(X) - 3\lvert \sigma(X)\rvert \ge 0
\end{equation*}
with equality if and only if $X$ is flat or Calabi-Yau. Here $\chi$ and $\sigma$ denote the Euler characteristic and signature, respectively. Satisfying the Hitchin-Thorpe inequality is a necessary, but not sufficient condition for $X$ to admit an Einstein metric. Using techniques from Seiberg-Witten theory, LeBrun \cite{LeBrun96} constructed the first examples of closed 4-manifolds that satisfy the strict Hitchin-Thorpe inequality but nonetheless do not admit Einstein metrics. Kotschick \cite{Kotschick98} and Braungardt-Kotschick \cite{BraungardtKotschick05} showed that existence of Einstein metrics depends on the smooth structure of the underlying manifold by constructing pairs $(X_i,Z_i)$ of closed oriented 4-manifolds with $X_i$ homeomorphic to $Z_i$ and so that $X_i$ admits K\"{a}hler-Einstein metric and so that $Z_i$ does not admit any Einstein metric. See also the works of Del Rio Guerra \cite{DelRioGuerra}, LeBrun \cite{LeBrun01}, and Ishida \cite{Ishida12} for nonexistence of Einstein metrics in the non simply connected case. 

This paper is interested in a finite volume generalization of this problem. Suppose that $X$ is a noncompact oriented Riemannian 4-manifold of finite topological type with cylindrical ends all of the form $T^3 \times [0,\infty)$. Such manifolds arise as link complements $\overline{X} - L$, where $\overline{X}$ is ambient closed oriented manifold and $L$ is a disjoint collection of smoothly embedded 2-tori with each component $T_i$ having self-intersection 0. Conversely, via generalized Dehn filling, we can realize all noncompact oriented Riemannian 4-manifold of finite topological type with $T^3$ ends as link complements in some ambient closed oriented 4-manifold. Given a flat metric $g_{T^3}$ on $T^3$, one can construct a hyperbolic cusp metric on $T^3 \times [0,\infty)$ by taking
\begin{equation*}
    g_{hyp} = dt^2 + e^{-2t}g_{T^3}
\end{equation*}
where $t$ is the coordinate on $[0,\infty) $. Say that a metric $g$ on $X$ is asymptotically hyperbolic if on each end there is some flat metric $g_{T^3}$ so that $g$ is asymptotically $C^2$-close to $g_{hyp}$. 

The most obvious source of finite volume asymptotically hyperbolic Einstein metrics comes from hyperbolic 4-manifolds. By Long-Reid \cite{LongReid00} all finite volume hyperbolic 4-manifolds with $T^3$ cusps will have signature 0, so it is an interesting problem to construct examples of noncompact 4-manifolds with signature 0 that do not admit any asymptotically hyperbolic cusps. Examples of hyperbolic link complements of simply connected 4-manifolds have been constructed by Invansic\cite{Ivansic04}, Invansic-Dubravko-Ratcliffe \cite{IvansicDubravkoRatcliffe05}, and Saratchandran \cite{Saratchandran18}.

Another rich source of examples of asymptotically hyperbolic Einstein metrics was constructed by Anderson \cite{Anderson06} via generalized Dehn filling of finite volume hyperbolic 4-manifolds. See also \cite{Bamler12}. To describe the construction, let $X$ be a finite volume hyperbolic 4-manifold with $q$ cusp ends, all isometric to a standard hyperbolic cusp metric $T^3_k \times [0,\infty)$. For $p \le q$ of the cusp ends of $X$, choose a simple geodesic $\gamma_k \subset T^3_k$; implicitly, we have a flat metric on $T^3_k$ coming from restriction of the hyperbolic cusp. On each of the $p$ ends, attach a solid torus $D^2 \times T^2$ to $T^3_k$ by a diffeomorphism $\partial D^2 \times T^2 \cong T^3$ sending $\partial D^2$ to $\gamma_k$. Denote $\overline{\gamma} = (\gamma_1,\cdots, \gamma_p)$ and let $X_{\overline{\gamma}}$ be the manifold obtained by Dehn filling on all of the ends. When $p < q$, $X_{\overline{\gamma}}$ will be noncompact, with $p - q$ cusps. Anderson showed that if the length of each $\gamma_k$ is long enough, then $X_{\overline{\gamma}}$ admits an Einstein metric, and the metric on the cusps will be asymptotically hyperbolic. This is done by constructing an approximate Einstein metric on the Dehn-filling and using an inverse function theorem to perturb the metric to an actual Einstein metric. Note that all such examples are necessarily signature 0; to the best of the author's knowledge, it is still unknown whether there exist any manifolds admitting asymptotically hyperbolic Einstein metrics with only $T^3$ cusps and nonzero signature.

The goal of this paper is to find an obstruction to the existence of finite volume asymptotically hyperbolic Einstein metrics. In this setting, Dai-Wei \cite{DaiWei07} have proved a generalization of the Hitchin-Thorpe inequality for complete finite volume manifolds with with a specified asymptotic geometry at infinity. The version of their theorem adapted to complete, asymptotically hyperbolic Einstein metrics with $T^3$  ends is the following. 
\begin{theorem}[Dai-Wei]\label{DaiWei}
Suppose that $(X,g)$ is a complete noncompact Einstein 4-manifold with cylindrical ends all of the form $T^3 \times [0,\infty)$. Furthermore, suppose that $g$ is an asymptotically hyperbolic metric. Then $X$ satisfies
\begin{equation*}
    2\chi(X) \ge 3\left\lvert \sigma(X)\right\rvert.
\end{equation*}
\end{theorem}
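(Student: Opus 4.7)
The strategy is to mimic the classical proof of Hitchin--Thorpe via a truncation-and-limit argument. Recall that on a closed $4$-manifold the Chern--Gauss--Bonnet and signature formulas combine to give
\begin{equation*}
2\chi - 3\sigma = \frac{1}{4\pi^2}\int\left(\tfrac{s^2}{24} + 2|W^-|^2 - \tfrac{1}{2}|\mathring{Ric}|^2\right)dV,
\end{equation*}
which on an Einstein manifold reduces to a manifestly nonnegative integral. I would run the analogous computation on a compact exhaustion of $X$ and then show that the boundary corrections have nonnegative limit.

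Concretely, for each $T \geq 0$ let $X_T$ denote the compact manifold with boundary obtained by truncating each cusp at $T^3 \times \{T\}$. Since each cusp deformation retracts to its bounding $T^3$, one has $\chi(X_T)=\chi(X)$ and $\sigma(X_T)=\sigma(X)$ for all $T$. Applying Chern--Gauss--Bonnet with boundary and the Atiyah--Patodi--Singer signature theorem to $X_T$ yields
\begin{align*}
\chi(X)&=\frac{1}{8\pi^2}\int_{X_T}\!\left(\tfrac{s^2}{24}+|W^+|^2+|W^-|^2-\tfrac{1}{2}|\mathring{Ric}|^2\right)dV+B_\chi(T),\\
\sigma(X)&=\frac{1}{12\pi^2}\int_{X_T}\!\left(|W^+|^2-|W^-|^2\right)dV+B_\sigma(T),
\end{align*}
where $B_\chi(T)$ is the Pfaffian transgression integrated over $\partial X_T$ (a polynomial in the intrinsic curvature and second fundamental form of the slices) and $B_\sigma(T)$ combines the analogous $L$-polynomial transgression with $-\tfrac{1}{2}\eta(\partial X_T)$. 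Invoking $\mathring{Ric}=0$, the appropriate combination becomes
\begin{equation*}
2\chi(X)\mp 3\sigma(X)=\frac{1}{4\pi^2}\int_{X_T}\!\left(\tfrac{s^2}{24}+2|W^\mp|^2\right)dV+\left(2B_\chi(T)\mp 3B_\sigma(T)\right),
\end{equation*}
whose bulk integrand is nonnegative, so the theorem reduces to showing the correction $2B_\chi(T)\mp 3B_\sigma(T)$ tends to $0$ as $T\to\infty$.

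The delicate step, which I expect to be the main obstacle, is showing the boundary terms vanish in the limit. For the model cusp $g_{hyp}=dt^2+e^{-2t}g_{T^3}$, the slice $T^3\times\{T\}$ is totally umbilic with shape operator $-\mathrm{Id}$, the induced metric is flat, and the ambient sectional curvature is identically $-1$. The Pfaffian transgression then reduces to a multiple of the Euler characteristic of $T^3$ (which vanishes), and the signature transgression vanishes identically; moreover, the eta invariant of the odd signature operator on $T^3$ with any flat metric is zero because $T^3$ admits the orientation-reversing isometry $x\mapsto -x$, forcing $\eta=-\eta$. The asymptotically hyperbolic hypothesis supplies $C^2$-closeness of $g$ to $g_{hyp}$ on each end, hence pointwise estimates on the curvature and shape operator of each slice; combined with the exponential decay $\mathrm{vol}(T^3\times\{T\})\sim e^{-3T}$, this forces $B_\chi(T)$ and $B_\sigma(T)$ to tend to $0$.

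Finally, I would verify that the bulk integrals converge as $T\to\infty$: the Einstein condition together with $C^2$-closeness to $g_{hyp}$ forces the full curvature tensor on each end to approach the hyperbolic model, so $|W^\pm|\to 0$ while $s\to -12$, and finiteness of each cusp's volume makes all three curvature $L^2$-integrals finite. Passing to the limit in the displayed identity then yields $2\chi(X)\geq 3|\sigma(X)|$.
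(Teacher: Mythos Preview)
The paper does not itself prove Theorem~\ref{DaiWei}; it is quoted from Dai--Wei, and the only trace of their argument recorded here is the intermediate estimate~\eqref{DaiWeiEstimate}. Your truncate-and-limit strategy via Chern--Gauss--Bonnet with boundary and the APS signature formula is the standard route to such results and is consistent with that hint: dropping the nonnegative $2|W^\mp|^2$ term from your limiting identity recovers precisely~\eqref{DaiWeiEstimate}.

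One correction is needed. For the model cusp the Pfaffian transgression on the slice $T^3\times\{T\}$ is \emph{not} a multiple of $\chi(T^3)$. The boundary integrand in the four-dimensional Gauss--Bonnet formula contains a term cubic in the shape operator, which for a flat totally umbilic slice with shape operator $-\mathrm{Id}$ is a nonzero constant. You can see this directly by applying your own identity to the model cusp segment $[0,T]\times T^3$: its Euler characteristic is zero, yet the bulk integral of $s^2/24=6$ is strictly positive, so the two boundary contributions cannot both vanish; the one at $t=0$ carries a fixed nonzero value while the one at $t=T$ decays. The correct (and sufficient) reason $B_\chi(T)\to 0$ is the argument you give in the next sentence: the transgression integrand is uniformly bounded under the $C^2$-closeness hypothesis while $\mathrm{vol}(T^3\times\{T\})\sim e^{-3T}$. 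With that fix your outline goes through; your treatment of $\eta(T^3)=0$ via the orientation-reversing isometry and of the convergence of the bulk integrals is correct.
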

The main theorem of this paper allows us to construct 4-manifolds $X$ with $T^3$ cusps that satisfy the strict inequality of Dai-Wei yet do not admit any asymptotically hyperbolic Einstein metrics. 
\begin{theorem}\label{MainTheorem}
    Let $X_1$ be a closed symplectic manifold with $b^+(X_1) \ge 2$, and let 
    \begin{equation*}
        X_2 = \#_i^k (S^1 \times Y_i) \#_j^{\ell} (S^2 \times \Sigma_{g_j})
    \end{equation*}
    Let $\overline{X} = X_1 \# X_2$ and let $L$ be a collection of smoothly embedded 2-tori each with 0 self-intersection. If $ -\chi(X_2) + 2 \ge \frac{1}{3} (2 \chi(X_1) + 3\sigma(X_1))$, then it follows that $X = \overline{X} - L$ does not admit any asymptotically hyperbolic Einstein metrics. 
\end{theorem}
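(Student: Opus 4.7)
Assume for contradiction that $X = \overline{X} - L$ admits a complete, finite-volume asymptotically hyperbolic Einstein metric $g$. The plan is to produce a nontrivial Seiberg-Witten-type solution on $(X,g)$ and extract from it a LeBrun-style integrated curvature bound that, combined with the Dai-Wei refinement of the Chern-Gauss-Bonnet and signature formulas for such Einstein metrics, contradicts the arithmetic hypothesis.

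First, I would set up a monopole class on the closed manifold $\overline{X}$. Since $X_1$ is symplectic with $b^+(X_1)\ge 2$, Taubes' theorem provides a canonical Spin$^c$ structure $\mathfrak{s}_0$ on $X_1$ whose ordinary Seiberg-Witten invariant is nonzero and which satisfies $c_1(\mathfrak{s}_0)^2 = 2\chi(X_1) + 3\sigma(X_1)$. The summands of $X_2$ each have $b^+>0$ or nontrivial $b_1$, so the ordinary SW invariant of $\mathfrak{s}_0$ is killed by the connected sum. However, each building block $S^1\times Y_i$ or $S^2\times\Sigma_{g_j}$ carries a natural involution compatible with a $Pin^-(2)$-Spin$^c$ structure, and a Nakamura-style $Pin^-(2)$-equivariant gluing extends $\mathfrak{s}_0$ to a $Pin^-(2)$-monopole class on $\overline{X}=X_1\# X_2$ whose $Pin^-(2)$-monopole invariant does not vanish.

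Next, I transport this class to the noncompact manifold $X$ by Biquard's method. Each $T_i\subset L$ has trivial normal bundle, so removing $L$ naturally yields the $T^3\times[0,\infty)$ ends in the hypothesis, and on each cusp $g$ is asymptotic to a hyperbolic cusp metric. Following Biquard, one constructs an approximate $Pin^-(2)$-monopole by gluing the $\overline{X}$-solution on a large compact piece to explicit model solutions on each cusp (controlled by the negative scalar curvature of the hyperbolic cusp) and then perturbs to an honest solution via an implicit function theorem in weighted Sobolev spaces. The $Pin^-(2)$-equivariant analogue of Biquard's gluing, weighted Fredholm theory, and kernel triviality at infinity is the key analytic step, and where I expect the main obstacle to lie: the non-abelian structure group and quaternionic spinor bundles introduce complications absent from the usual Spin$^c$ setting.

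A LeBrun-style Weitzenbock estimate applied to the resulting $Pin^-(2)$-monopole then gives the integrated curvature bound
\begin{equation*}
\frac{1}{4\pi^2}\int_X\left(\frac{s_g^2}{24} + 2|W_g^+|^2\right)\,dV_g \;\ge\; \tfrac{2}{3}\bigl(2\chi(X_1)+3\sigma(X_1)\bigr).
\end{equation*}
By the Dai-Wei adaptation of the Chern-Gauss-Bonnet and Hirzebruch signature formulas for complete finite-volume asymptotically hyperbolic Einstein 4-manifolds, the left-hand side equals $2\chi(X)+3\sigma(X)$. Using $\chi(T_i)=0$ and the fact that every $X_2$-factor has zero signature (and each $T_i$ has self-intersection zero), one computes $\chi(X) = \chi(X_1)+\chi(X_2)-2$ and $\sigma(X) = \sigma(X_1)$. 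Substituting and rearranging yields
\begin{equation*}
\tfrac{1}{6}\bigl(2\chi(X_1)+3\sigma(X_1)\bigr) \;\ge\; -\chi(X_2)+2,
\end{equation*}
which, combined with the hypothesis $-\chi(X_2)+2 \ge \tfrac{1}{3}(2\chi(X_1)+3\sigma(X_1))$, forces $2\chi(X_1)+3\sigma(X_1)\le 0$ and hence a strict contradiction whenever $c_1(\mathfrak{s}_0)^2>0$; the borderline case $2\chi(X_1)+3\sigma(X_1)=0$ is handled by the Kähler-Einstein rigidity of the equality case of LeBrun's estimate, which is ruled out by the connected-sum decomposition of $\overline{X}$.
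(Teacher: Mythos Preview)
Your high-level strategy---use Nakamura's $Pin^-(2)$ gluing to get a monopole class on $\overline{X}=X_1\#X_2$, produce a solution on the open manifold, and play a LeBrun-type estimate off against Dai--Wei---is the same as the paper's. But your implementation diverges from the paper in three places, and the first is a genuine misidentification of the key analytic step.

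What you call ``Biquard's method'' (glue an approximate monopole to model cusp solutions and perturb by an implicit function theorem in weighted Sobolev spaces) is \emph{not} what the paper does. The paper runs Biquard's argument in the opposite direction: it constructs a sequence of \emph{closed} Riemannian manifolds $(\overline{X},g_j)$ converging to $(X,g)$ on compact sets, uses the nonvanishing invariant to obtain genuine solutions on each $(\overline{X},g_j)$, and then extracts a $C^\infty_{\mathrm{loc}}$ limit on $X$ via a bootstrap whose constants are shown to be uniform in $j$. No weighted Fredholm theory or IFT appears. Moreover, the paper never does $Pin^-(2)$ analysis on the noncompact manifold at all: it lifts each $Pin^-(2)$ monopole on $(\overline{X},g_j)$ to an ordinary $I$-invariant Seiberg--Witten monopole on the double cover $(\widetilde{X},\widetilde{g}_j)$, and runs the limiting argument and the curvature estimate there. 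This completely sidesteps the ``$Pin^-(2)$-equivariant weighted Fredholm'' difficulty you single out.

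Finally, the paper uses only the scalar curvature estimate $\tfrac{1}{32\pi^2}\int_X s_g^2 \ge c_1^2(\mathfrak{s})$ (Di Cerbo's $L^2$ version) together with the Dai--Wei bound $2\chi-3|\sigma|\ge \tfrac{1}{96\pi^2}\int s_g^2$, yielding $2\chi(\overline{X})-3|\sigma(\overline{X})|\ge\tfrac{1}{3}c_1^2(\mathfrak{s}_1)$ directly, with equality only in the excluded Ricci-flat K\"ahler case. Your route through $2\chi+3\sigma$ and the $|W^+|^2$ term would require the stronger LeBrun estimate in the noncompact $L^2$ setting, which the paper does not establish, and it forces you into a separate $c_1^2=0$ endgame that the paper's argument does not need.
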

\begin{remark}
    We can construct construct examples of manifolds that do not admit any asymptotically hyperbolic Einstein metrics yet satisfy the inequality in theorem \ref{DaiWei} by choosing $X_2$ that satisfies the inequality $\frac{1}{2}  (2 \chi(X_1) + 3\sigma(X_1)) > -\chi(X_2) + 2 $.
\end{remark}
This allows us to construct many examples of 4-manifolds that do not admit any asymptotically hyperbolic Einstein metrics by using results on the geography of symplectic 4-manifolds and general type surfaces. Let $(x,c)$ be a pair of positive integers such that  $c \equiv x \mod 8$. For all but finitely many such pairs satisfying $c < 9 x$, Park \cite{Park07} has constructed symplectic 4-manifolds $X$ with $ \chi_h(X) = x$ and $c_1^2(X) = c$. The fundamental group of these manifolds can be chosen to be any finitely generated group. Here $\chi_h$ is the holomorphic Euler characteristic, defined by
\begin{equation*}
   \chi_h(X) = \frac{\sigma(X) + \chi(X)}{4}.
\end{equation*}
Because for a symplectic 4-manifold $c_1^2(X) = 2\chi(X) + 3\sigma(X)$, it follows that for all but finitely many pairs $(\chi,\sigma)$ of integers satisfying $\chi > 0$, $ -2\chi/3 \le \sigma < \chi/3$, and $\chi + \sigma \equiv 0 \mod 4$ there exists a simply connected spin symplectic 4-manifold $X$ with $(\chi(X),\sigma(X)) = (\chi,\sigma)$. 

Now fix a pair of integers $(\chi,\sigma)$ with $\chi + \sigma \equiv 0\mod 2$. We construct construct a closed oriented 4-manifold $\overline{X} = X_1 \# X_2$ satisfying the hypothesis of theorem \ref{MainTheorem} with $\chi(\overline{X}) = \chi$ and $\sigma(\overline{X}) = \sigma$ as follows. Choose $X_1$ following Park's construction with $\sigma(X_1) = \sigma$ and $\chi(X_1) \ge 3\chi + 9\lvert \sigma \rvert$, and let $X_2$ be a connect sum of components of the form $Y \times S^1$ and $\Sigma_g \times S^2$ so that $\chi(X_2) = \chi + 2 - \chi(X_1)$. Since $- \chi \le \chi(X_1)/3$, it follows that 
\begin{equation*}
    \begin{split}
        -\chi(X_2) + 2 &= \chi(X_1) - \chi\\
        &\ge \frac{1}{3}(2\chi(X_1) + 3 \lvert \sigma(X_1) \rvert)
    \end{split}
\end{equation*}
and therefore $\overline{X}$ satisfies the hypothesis of theorem \ref{MainTheorem}. So any complement of smoothly embedded 2-tori in $\overline{X}$, each with 0 self-intersection, will not admit any cusped asymptotically hyperbolic Einstein metrics. 

We can also find many examples coming from general type complex surfaces. Given $X$ a surface of general type, define the Chern slope to be $c_1^2(X) / c_2(X)$. Persson \cite{Persson81} showed that every rational number $r \in [1/5,2]$ arises as the Chern slope of some simply connected general type surface. Rolleau and Urz\'{u}a \cite{RoulleauUrzua15} showed that the Chern slopes of simply connected complex surfaces of general type are dense in the interval $[2,3]$. When $G$ is the fundamental group of nonsingular complex projective surface, Troncoso and Urz\'{u}a \cite{TroncosoUrzua23} show that Chern slopes of complex surfaces of general type with fundamental group $G$ are dense in $[1,3]$.

For more explicit examples of manifolds satisfying the hypothesis of theorem \ref{MainTheorem}, consider the following. Given $n \ge 5$, Baykur and Hamada \cite{BaykurHamada23} have recently constructed minimal symplectic manifolds $X_n$ that are homeomorphic to $\#^{2n+1}(S^2 \times S^2)$. It follows that for $g \ge (4n + 7)/6$, the manifold 
\begin{equation*}
    \overline{X} = X_n\# (S^2 \times \Sigma_g)
\end{equation*}
satisfies the hypothesis of theorem \ref{MainTheorem}. 

Einstein metrics in the finite volume noncompact setting have been studied by Biquard \cite{Biquard97} for finite volume quotients of $\mathbb{CH}^2$ and Di Cerbo \cite{DiCerbo11}\cite{DiCerbo13} in the case where $X$ is a finite volume K\"{a}hler surface of logarithmic general type. We now briefly sketch the idea of the proof; the strategy of the proof follows that of Biquard and Di Cerbo. Suppose that $X$ admits an asymptotically hyperbolic Einstein metric. In the proof of theorem \ref{DaiWei}, Dai and Wei derive a scalar curvature lower bound for asymptotically hyperbolic Einstein metrics
\begin{equation}\label{DaiWeiEstimate}
        2\chi(X) - 3 \lvert \sigma(X) \rvert \ge \frac{1}{4\pi^2}\int_X \frac{s_g^2}{24} dvol_g.
\end{equation}
Conversely, suppose that $g$ is an asymptotically hyperbolic metric on $X$ that admits an $L^2$ solution  $(A,\phi)$ to the Seiberg-Witten equations. Then, there an $L^2$ analogue of an estimate of LeBrun \cite{LeBrun95} proved in the complete finite volume setting by Di Cerbo \cite{DiCerboThesis}. The statement of this theorem uses the language of $L^2$ cohomology; for more details see Section \ref{L2cohomology}. 
\begin{theorem}[Di Cerbo]\label{DiCerboEstimate}
    Suppose that $(X,g)$ is a closed 4-manifold and suppose that $(A,\psi)$ is an $L^2$ solution to the Seiberg-Witten equations. Then
    \begin{equation}
        \frac{1}{32 \pi^2}\int_X s_g^2 dvol_g \ge c_1^2(\mathfrak{s})
    \end{equation}
    where $c_1(\mathcal{L})$ is the $L^2$ Chern class of $[F_A]$. Equality holds if and only if $g$ has constant scalar curvature and is K\"ahler with respect to some complex structure.
\end{theorem}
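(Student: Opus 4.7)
The plan is to adapt LeBrun's scalar curvature estimate from \cite{LeBrun95}, originally proved in the closed case, to the finite-volume setting by combining a cutoff/integration-by-parts argument with the $L^2$ cohomology pairing from Section \ref{L2cohomology}.

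The starting point is the Weitzenb\"ock formula for the Dirac operator on positive spinors, combined with the two Seiberg-Witten equations $D_A\psi = 0$ and $F_A^+ = \sigma(\psi)$. Together with the pointwise identity $\langle \sigma(\psi)\cdot\psi,\psi\rangle = \tfrac{1}{2}|\psi|^4$, these give
\begin{equation*}
    \frac{1}{2}\Delta |\psi|^2 + |\nabla_A \psi|^2 + \frac{s_g}{4}|\psi|^2 + \frac{1}{4}|\psi|^4 = 0.
\end{equation*}
On a closed manifold this integrates to $\int_X |\psi|^4 \le \int_X (-s_g)|\psi|^2$, whence Cauchy-Schwarz yields $\int_X |\psi|^4 \le \int_X s_g^2$. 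Combining with $|F_A^+|^2 = \tfrac{1}{8}|\psi|^4$ and $(2\pi)^2 c_1^2(\mathfrak{s}) = \int_X |F_A^+|^2 - |F_A^-|^2$ gives the claimed inequality in the compact case.

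In the finite-volume noncompact setting, the main task is to justify this integration by parts. I would choose a sequence of compactly supported cutoffs $\chi_R$, with $\chi_R \equiv 1$ on a compact exhaustion of $X$ and $|\nabla \chi_R|$ tending to zero in a controlled fashion. Pairing the Weitzenb\"ock identity against $\chi_R^2 \psi$ produces error terms of the form $\int \langle \nabla_A \psi, \nabla \chi_R \otimes \chi_R \psi\rangle$, which are controlled by the $L^2$ norms of $\psi$, $\nabla_A \psi$, and $F_A$. The $L^2$ control of $F_A$ is automatic from the curvature equation once $\psi \in L^4$, which in turn follows from $\psi \in L^2$ via an elliptic bootstrap using the integrated identity. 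On the topological side, $F_A \in L^2$ together with the $L^2$ cohomology framework of Section \ref{L2cohomology} is what lets us interpret $\frac{1}{4\pi^2}\int_X |F_A^+|^2 - |F_A^-|^2$ as the self-pairing $c_1^2(\mathfrak{s})$ of the $L^2$ Chern class.

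For the equality statement, each inequality in the chain must be tight: Cauchy-Schwarz forces $|\psi|^2 = -c\, s_g$ pointwise for a constant $c \ge 0$; dropping $|\nabla_A \psi|^2$ forces $\nabla_A \psi \equiv 0$; and reducing $c_1^2$ to its self-dual piece forces $F_A^- \equiv 0$. A nonzero parallel spinor on a spin$^c$ four-manifold determines a compatible integrable almost-complex structure with respect to which $g$ is K\"ahler and the K\"ahler form is proportional to $\sigma(\psi)$; constant scalar curvature then follows because $|\psi|^2 = -c\, s_g$ and $|\psi|^2$ is parallel. The principal obstacle is the $L^2$ analysis rather than the algebra: establishing that $\nabla_A \psi \in L^2$, showing that the cutoff errors vanish as $R \to \infty$, and verifying that the $L^2$-cohomological self-pairing of $[F_A]$ equals the intended topological quantity $c_1^2(\mathfrak{s})$. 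Once those analytic ingredients are in place, the closed-case argument transfers verbatim.
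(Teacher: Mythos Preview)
The paper does not itself prove Theorem~\ref{DiCerboEstimate}; it attributes the result to Di~Cerbo~\cite{DiCerboThesis} and only sketches the closed-case argument at the start of Section~\ref{SeibergWitten} (Weitzenb\"ock formula, integration by parts, $\lvert F_A^+\rvert^2 = \tfrac{1}{8}\lvert\phi\rvert^4$). Your proposal follows exactly this standard LeBrun argument and extends it to the complete finite-volume setting via cutoffs, which is the expected route and is consistent with the paper's treatment and its $L^2$ Chern--Weil discussion in Section~\ref{L2cohomology}; the equality analysis you give is also the standard one.
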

Combining these two inequalities, we see that if $(X,g)$ is an asymptotically hyperbolic Einstein manifold that admits an $L^2$ solution  $(A,\phi)$ to the Seiberg-Witten equations, then 
\begin{equation}
    2\chi(X) - 3 \lvert \sigma(X) \rvert \ge \frac{1}{3}c_1^2(\mathfrak{s})
\end{equation}
So, in order to show nonexistence of an asymptotically hyperbolic Einstein metric, it is sufficient to prove an existence theorem for $L^2$ solutions of the Seiberg-Witten equations for any asymptotically hyperbolic metric $g$ with $\frac{1}{3}c_1^2(\mathfrak{s}) > 2\chi(X) - 3 \lvert \sigma(X)\rvert$.

To construct such Seiberg-Witten monopoles, we follow a strategy due to Biquard \cite{Biquard97}. We construct a sequence of metric compactifications $(\overline{X},g_j)$ that converge in the $C^\infty$ topology on compact sets to $(X,g)$. Let $(A_j,\phi_j)$ be solutions to the Seiberg-Witten equations on $(\overline{X},g_j, \mathfrak{s})$. By carefully controlling the curvature of the pinching process we can perform Seiberg-Witten bootstrap over compact subsets $K \subset X$ to get a limiting solution $(A,\phi)$ on $(X,g)$. The metric compactifications are the main content of Section \ref{MetricApprox}. 

\subsection{Acknowledgements}
I would like to thank my advisor Francesco Lin for suggesting this problem and for his guidance and encouragement throughout the entire process. I would also like to thank John Morgan, Shuang Liang, Hokuto Konno, and Masaki Taniguchi for helpful conversations over the course of this project. The author was partially supported by NSF grant DMS-2203498.

\section{Approximating Metrics and Pertubations}\label{MetricApprox}
In this section we construct the approximating metrics $g_j$ on $\overline{X}$. Let $X = \overline{X} - L$, and suppose that $L$ is a finite disjoint union of smoothly embedded 2-tori, each with 0 self-intersection, so that $X$ has $T^3$ ends, and suppose that $g$ is an asymptotically hyperbolic metric on $X$.  For a given end $T^3 \times [0,\infty)$, suppose that with respect to a given flat metric $g_0$ on $T^3$, $g$ is asymptotically $C^2$ close to the model hyperbolic metric $dt^2 + e^{-2t}g_0$. Decompose $T^3 = S^1 \times T^2$, where the $T^2$ component comes from embedded 2-torus and the $S^1$ comes from the unit normal bundle, and write $g_0 = g_{T^2} + g_{S^1}$.

We define warping functions $\phi_j$ and $\psi_j$ as follows. Let $\phi_j(t) = e^{-t}$ for $t\in [0,j]$. For $\varepsilon_j$ small we let $\phi_j'$ very rapidly decrease to $-1$ on the interval $[j,j+ \varepsilon_j]$ by making $\phi_j''$ very negative, and for an appropriate choice of $T_J$, let $\phi_j(t) = T_j -t$ for $t \in [j+  \varepsilon_j,T_j]$. Similarly, define $\psi_j$ with $\psi_j(t) = e^{-t}$ for $t \in [0,j]$ and $\psi_j(t) = C_j$ for an appropriately chosen constant $C_j$ on the interval $t \in [j + \varepsilon_j,T_j)$. In the intermediate interval  $[j,j+ \varepsilon_j]$, we slowly increase $\psi_j'$ and decrease $\psi_j''$ to 0 such that $\lvert \psi'' \rvert \le \lvert \psi' \rvert$. This is always possible for $\varepsilon_j$ small enough and $C_j$ close enough to $e^{-j}$. We furthermore also impose the conditions that
\begin{equation}\label{ApproxAssumptions}
\begin{split}
    \frac{\phi_j'}{\phi_j} + 2\frac{\psi_j'}{\psi_j} \le -3
    \frac{\psi_j'}{\psi_j} \ge \frac{-2}{1 - \frac{\phi_j'}{\phi_j}}
\end{split}
\end{equation}
These inequalities will be important for curvature estimates. On the interval $[0,T_j) \times T^3$ define the metric
\begin{equation}
    \Tilde{g}_j = dt^2 + \psi_j^2 g_{T^2} + \phi_j^2 g_{S^1}.
\end{equation}
Since $\phi_j(T_j) = 0$ follows that the $S_1$ factor is completely pinched off, so $\Tilde{g}_j$ extends to the compactification $D^2 \times T^2$. The metric $\Tilde{g}_j$ is a standard hyperbolic cusp on the interval $[0,j]$ and a standard flat metric on the interval $[j+ \varepsilon_j,T_j]$. The condition on $\psi_j'/\psi_j$ is to ensure that the mean curvature of the slices in $\Tilde{g}_j$ is at least $3/2$. Indeed, we note that the following inequalities are true for all $t$ and $j$.
\begin{equation}\label{ApproxEstimates}
\begin{split}
    \frac{-\partial_t^2 \phi_j}{\phi_j} - 2\frac{\partial^2_t \psi_j}{\psi_j} + \frac{(\partial_t\phi_j)^2}{\phi_j^2} + 2\frac{(\partial_t\psi_j)^2}{\psi_j^2} &\ge -2\\
    \frac{-\partial_t^2 \phi_j}{\phi_j} - 2\frac{\partial_t \phi_j}{\phi_j} \cdot \frac{\partial_t \psi_j}{\psi_j} + \frac{(\partial_t\phi_j)^2}{\phi_j^2} &\ge -2\\
    \frac{-\partial^2_t \psi_j}{\psi_j} - \frac{\partial_t \phi_j}{\phi_j} \cdot \frac{\partial_t \psi_j}{\psi_j} &\ge -2
\end{split}
\end{equation}
These will be used in section \ref{SeibergWitten} to study the convergence of solutions of Seiberg-Witten equations.

Fix a smooth cutoff function $\chi$ with $\chi(t) = 0$ for $t \ge 1$ and $\chi(t) = 1$ for $t \le 0$. Set $\chi_j(t) = \chi(t - j)$. It follows that the metric 
\begin{equation*}
    g_j = \chi_j g + (1-\chi_j) \Tilde{g}_j
\end{equation*}
agrees with the original metric $g$ on the compact pieces $[0,j] \times T^3$ on each end, and extends to a metric on the compact manifold $\overline{X}$. 

We now compute the various curvature tensors of the metric $\Tilde{g}_j$, on the ends $T^3 \times [0,\infty)$. The Ricci curvature of $\Tilde{g}_j$ can be explicitly computed to be 
\begin{equation}\label{RicciComputation}
\begin{split}
    Ric_{\Tilde{g}_j} &= (\frac{-\partial_t^2 \phi_j}{\phi_j} - 2\frac{\partial^2_t \psi_j}{\psi_j}) dt^2 + (\frac{-\partial_t^2 \phi_j}{\phi_j} - 2\frac{\partial_t \phi_j}{\phi_j} \cdot \frac{\partial_t \psi_j}{\psi_j}) (\phi_j d\theta)^2  \\
    &+ (\frac{-\partial^2_t \psi_j}{\psi_j} - \frac{(\partial_t\psi_j)^2}{\psi_j^2} - \frac{\partial_t \phi_j}{\phi_j} \cdot \frac{\partial_t \psi_j}{\psi_j}) \psi_j^2 g_{T^2}
\end{split}
\end{equation}
By taking the trace, we may also compute the scalar curvature to be
\begin{equation}\label{ScalarComputation}
    s_{\Tilde{g}_j} = 2 \frac{-\partial_t^2 \phi_j}{\phi_j} - 4\frac{\partial^2_t \psi_j}{\psi_j} -4 \frac{\partial_t \phi_j}{\phi_j} \cdot \frac{\partial_t \psi_j}{\psi_j} - 2 \frac{(\partial_t\psi_j)^2}{\psi_j^2}
\end{equation}
Note here that by construction, $s_{\Tilde{g}_j}$ is uniformly bounded from below. The second fundamental form of metrics taking the form $dt^2 + g_t$ is given by $-\partial_t g_t/2$. For us, we can explicitly see it to be
\begin{equation}\label{SecondFundFormComputation}
    \mathrm{I\!I}_{\Tilde{g}_j} =\frac{(\partial_t\phi_j)^2}{\phi_j^2} \phi_j^2 d\theta^2 + \frac{(\partial_t\psi_j)^2}{\psi_j^2} \psi_j^2 g_{T^2}
\end{equation}
And the mean curvature can be computed as 
\begin{equation}\label{MeanCurvatureComputation}
    h_{\Tilde{g}_j} = \frac{-1}{2} ( \frac{(\partial_t\phi_j)^2}{\phi_j^2} + 2\frac{(\partial_t\psi_j)^2}{\psi_j^2} ) \ge \frac{3}{2}
\end{equation}
, where the lower bound follows from \ref{ApproxAssumptions}. By the hypothesis of $C^2$ closeness, for all $j$, the scalar curvatures of $(\overline{X},g_j)$ will be bounded from below by $s_{g_j} \ge C$ where $C$ is a uniform constant. Similarly, it follows that there is a uniform constant so that $Vol(\overline{X},g_j) \leq C \cdot Vol(X,g)$.

\section{$L^2$ Cohomology}\label{L2cohomology}
Suppose that $(X,g)$ is an oriented, noncompact $n$-manifold. Denote by $L^2\Omega^k(X,g)$ the $L^2$ completion of smooth $k$-forms with bounded $L^2$ norm taken with respect to $g$. If we restrict the deRham differential $d$ to a suitable dense linear subspace, we have a Hilbert complex
\begin{equation*}
    \cdots \rightarrow L^2\Omega^{k-1}(X,g)\rightarrow L^2\Omega^k(X,g) \rightarrow L^2\Omega^{k+1}(X,g) \rightarrow \cdots 
\end{equation*}
The maximal domain of existence for $d$ is 
\begin{equation*}
    Dom^k(d) = \{\alpha \in L^2\Omega^k(X,g) \vert d\alpha \in L^2\Omega^{k+1}(X,g)\}
\end{equation*}
Let $Z^k(X,g) = \{\alpha \in L^2\Omega^{k-1}(X,g) \vert d\alpha = 0\}$, and define the (reduced) $L^2$ cohomology to be
\begin{equation*}
    H^*_{(2)}(X,g) = Z^k(X,g)/\overline{dDom^{k-1}(d)}
\end{equation*}
Let $\mathcal{H}^k_{(2)}(X,g)$ denote the space of $g$-harmonic $k$-forms with finite $L^2$ norm. When $g$ is a complete metric, Gaffney \cite{Gaffney54} showed that there exists a Hodge-Kodaira decomposition
\begin{equation*}
    L^2\Omega^k(X,g) = \mathcal{H}^k_{(2)}(X,g) \oplus \overline{d\Omega^{k-1}} \oplus \overline{d^*\Omega^{k+1}}. 
\end{equation*}
This implies that there is an isomorphism
\begin{equation*}
     \mathcal{H}^*_{(2)}(X,g) \cong H^*_{(2)}(X,g)
\end{equation*}
When $X$ is orientable since the Poincar\'{e} duality isomorphism is an isometry, we also have Poincar\'{e} duality via the Hodge star
\begin{equation*}
    H^k_{(2)}(X,g) \cong H^{n-k}_{(2)}(X,g).
\end{equation*}
Therefore, when $n = 2k$ it makes sense to talk about $L^2$ self-dual and anti-self-dual forms. For hyperbolic cusps on $2n$ dimensional manifolds of maximal rank and finite volume, we have a computation due to Zucker \cite{Zucker82}. See also \cite{MazzeoPhillips90}.
\begin{theorem}[Zucker]
Suppose that $(X^{2n},g)$ is a finite volume complete orientable $2n$ dimensional manifold with asymptotically hyperbolic cusps. Then the $L^2$ cohomology is isomorphic to the various homology groups as follows:
\begin{equation*}
    H^k_{(2)}(X,g) \cong 
    \begin{cases}
        H^k_c(X) & k > n\\
        Im(H^k_c(X) \rightarrow H^k(X)) & k = n\\
        H^k(X) & k < n
    \end{cases}
\end{equation*}
\end{theorem}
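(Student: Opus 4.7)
My approach would follow the standard pattern for computing $L^2$ cohomology on manifolds with controlled asymptotic geometry: reduce to a model metric on the cusps, compute the local $L^2$ cohomology of a cusp, and patch globally via an exact sequence. First, I would invoke the bilipschitz invariance of reduced $L^2$ cohomology for complete metrics to replace $g$ by a quasi-isometric metric that coincides exactly with the model hyperbolic cusp $dt^2 + e^{-2t}g_{N_i}$ on each end $C_i \cong N_i \times [T,\infty)$, where $N_i$ is the flat cross-section. Writing $X = X_0 \cup \bigsqcup_i C_i$ with $X_0$ compact, the computation then splits into understanding each cusp and gluing to the compact part.

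For a single cusp, I would decompose a $k$-form as $\omega = \alpha(t) + dt \wedge \beta(t)$ with tangential $\alpha, \beta$. A direct calculation with the warped product yields $|\alpha(t)|^2_{g_{cusp}}\, dvol_{g_{cusp}} = e^{(2k-(2n-1))t}\, |\alpha(t)|^2_{g_N}\, dt \wedge dvol_{g_N}$ and an analogous factor for the $dt$-piece. In particular, the constant-in-$t$ extension of a parallel harmonic $k$-form on $N_i$ is $L^2$ precisely when $k \leq n-1$, and a check with the warped-product Hodge star shows it is harmonic on the cusp; applying $*_{g_{cusp}}$ produces $L^2$ harmonic forms in degrees $k \geq n+1$. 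Separation of variables against the Laplace spectrum of $N_i$ plus ODE analysis in $t$ then shows every $L^2$-closed form is cohomologous to such a contribution, giving $H^k_{(2)}(C_i) \cong H^k(N_i)$ for $k \leq n-1$, $H^k_{(2)}(C_i) \cong H^{k-1}(N_i)$ for $k \geq n+1$, and zero at $k=n$.

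For the global step, I would invoke a Mayer-Vietoris sequence for reduced $L^2$ cohomology (valid on complete manifolds) with cover by a thickening of $X_0$ and the interior of the cusp region, intersecting in a collar quasi-isometric to $\bigsqcup_i N_i \times \mathbb{R}$. Comparing the resulting sequence with the topological long exact sequence of the pair $(\overline{X}, \partial \overline{X})$, where $\overline{X}$ is the topological compactification adding $\bigsqcup_i N_i$ as boundary, and using the identifications $H^k(\overline{X}) \cong H^k(X)$ and $H^k(\overline{X}, \partial \overline{X}) \cong H^k_c(X)$, a diagram chase yields $H^k(X)$ for $k < n$, $H^k_c(X)$ for $k > n$, and at $k=n$ the image $\mathrm{Im}(H^n_c(X) \to H^n(X))$ from the cancellation between the compactly supported and ordinary cohomology contributions.

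The main obstacle will be the middle-degree case. At $k=n$ the exponent $(2n)-(2n-1)=1$ in the cusp scaling places harmonic $N_i$-forms exactly at the boundary of $L^2$ integrability, so identifying which classes admit $L^2$ harmonic representatives requires careful treatment of borderline Fourier modes and precise bookkeeping of the connecting homomorphisms; the fact that neither constant nor decaying extensions of harmonic $n$-forms on $N_i$ lie in $L^2$ is what forces the answer to collapse to the image of $H^n_c(X) \to H^n(X)$ rather than all of $H^n(X)$.
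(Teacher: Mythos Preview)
The paper does not actually prove this theorem; it is quoted as a result of Zucker (with a secondary reference to Mazzeo--Phillips) accompanied only by a two-sentence sketch: take a Fourier decomposition of harmonic forms on the cusps with respect to harmonic forms on the flat cross sections, analyze decay rates, and build a retraction operator realizing the isomorphism; then invoke quasi-isometry invariance of $L^2$ cohomology to pass from the model cusp metric to the asymptotically hyperbolic one.

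Your cusp analysis---separation of variables, the exponent computation $2k-(2n-1)$, and the identification of which harmonic cross-sectional forms extend to $L^2$ harmonic forms---together with your use of bilipschitz invariance matches that sketch closely. The one genuine divergence is the global step: you glue via a Mayer--Vietoris sequence for reduced $L^2$ cohomology, whereas the approach the paper alludes to constructs the isomorphism directly via a retraction operator. This is not merely cosmetic. The Mayer--Vietoris sequence for \emph{reduced} $L^2$ cohomology is not exact in general, even on complete manifolds, because images of differentials need not be closed subspaces; your parenthetical ``valid on complete manifolds'' is not a correct blanket justification. In the present situation (compact overlap collar, finite-dimensional $L^2$ cohomology on each piece) exactness can be recovered, but it needs an argument. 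The retraction-operator route avoids this issue entirely at the price of a more explicit construction, which is why Zucker's original proof is organized that way.
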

The idea behind the proof is to take a Fourier decomposition of harmonic forms on the cusps with respect to harmonic forms on the flat cross sections; after analyzing the decay rates, one can show its possible to define a suitable retraction operator to define the isomorphism. Since $L^2$ cohomology is invariant under quasi-isometry, it follows that the same is true for asymptotically hyperbolic cusps. When $n = 4$, $k=2$ and $X$ has asymptotically hyperbolic $T^3$ cusps, it follows that
\begin{equation*}
    H^2_{(2)}(X,g) = Im(H^2_c(X) \rightarrow H^2(X)).
\end{equation*}
When $X = \overline{X} - L$, there are two relative long exact sequences of pairs given by $(\overline{X},L)$ and $(X,\partial X)$. Here $\partial X$ is the unit normal bundle of $L$, realizing the geometric boundary of $X$.
\begin{equation*}
\begin{tikzcd}
    H^2(\overline{X},L) \arrow{r} \arrow{d} & H^2(\overline{X}) \arrow{r} \arrow{d} & H^2(L) \arrow{d}\\
    H^2(X,\partial X) \arrow{r} & H^2(X) \arrow{r} & H^2(\partial X)
\end{tikzcd}
\end{equation*}
The arrows are the ones induced by restriction. A diagram chase shows that $\omega \in Im(H^2_c(X) \rightarrow H^2(X))$ if and only if for all the 2-tori $T_i \subset L$, $\langle \omega,[T_i] \rangle = 0$. Summarizing, this shows that:
\begin{proposition}
    Suppose $X = \overline{X} - L$ and $g$ is an asymptotically hyperbolic metric. Then the reduced $L^2$ cohomology is
    \begin{equation*}
    H^k_{(2)}(X,g) \cong 
    \begin{cases}
        H^k(\overline{X},L) & k > 2\\
        \{\omega \in H^2(X) \vert \langle \omega,[T_i] \rangle = 0\} & k = 2\\
        H^k(X) & k < 2
    \end{cases}
    \end{equation*}
\end{proposition}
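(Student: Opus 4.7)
The plan is to combine Zucker's theorem with standard topological identifications of the groups on the right-hand side. For $k < 2$, Zucker's theorem directly gives $H^k_{(2)}(X,g) \cong H^k(X)$, so this case is immediate. For $k > 2$, I would identify $H^k_c(X) \cong H^k(\overline{X}, L)$ by excision. Concretely, $X$ is diffeomorphic to the interior of the compact manifold-with-boundary $M = \overline{X} \setminus \mathrm{int}(\nu(L))$, where $\nu(L)$ is a closed tubular neighborhood of $L$, so that $\partial M \cong \partial X$ and $H^k_c(X) \cong H^k(M, \partial M)$. Excision applied to $(\overline{X}, \nu(L))$, together with the deformation retract $\nu(L) \simeq L$, then yields $H^k(M, \partial M) \cong H^k(\overline{X}, L)$.

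The main case is $k = 2$. By Zucker's theorem, $H^2_{(2)}(X, g) \cong Im(H^2_c(X) \to H^2(X))$, which by exactness of the bottom row of the commutative diagram displayed in the excerpt equals $\ker(H^2(X) \to H^2(\partial X))$. To identify this kernel with $\{\omega \in H^2(X) : \langle \omega, [T_i] \rangle = 0 \text{ for all } i\}$, one direction is immediate: if $\omega|_{\partial X} = 0$, then pairing with any parallel pushoff $T'_i \subset \partial X$ of $T_i$ yields $\langle \omega, [T_i] \rangle = 0$. For the converse, I would use the commutative square together with the decomposition $T^3_i = T^2_i \times S^1_i$ coming from the triviality of the normal bundle of each $T_i$. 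This produces a K\"unneth decomposition $H^2(T^3_i) \cong H^2(T^2_i) \oplus (H^1(T^2_i) \otimes H^1(S^1_i))$, and the right-column vertical map $H^2(L) \to H^2(\partial X)$ is the pullback along the circle bundle $\partial X \to L$ and therefore lands in the $\bigoplus_i H^2(T^2_i)$ summand. Chasing the commutative square shows that the image of $H^2(X) \to H^2(\partial X)$ lies in this same summand, whose $i$-th component is exactly $\langle \omega, [T_i] \rangle$; hence the vanishing of all these pairings forces $\omega|_{\partial X} = 0$.

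The main obstacle will be the last step of the diagram chase, namely verifying that for $\omega \in H^2(X)$ the restriction $\omega|_{\partial X}$ has trivial projection onto the ``meridian'' summands $H^1(T^2_i) \otimes H^1(S^1_i) \subset H^2(T^3_i)$. Equivalently, one must check that the 2-cycles $\alpha \times S^1_i \subset \partial X$ for $\alpha \in H_1(T^2_i)$ become null-homologous in $H_2(X)$; this should follow from the triviality of each normal bundle combined with the Mayer--Vietoris sequence for the decomposition $\overline{X} = X \cup \nu(L)$, which controls how cycles on $\partial X$ extend across the gluing.
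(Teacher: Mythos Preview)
Your overall strategy is exactly the paper's: invoke Zucker's theorem, use excision for $k>2$, and for $k=2$ run a diagram chase on the displayed ladder between the long exact sequences of $(\overline{X},L)$ and $(X,\partial X)$. You go further than the paper by actually attempting the chase, and you correctly isolate the crux: to pass from ``$\langle\omega,[T_i]\rangle=0$ for all $i$'' to ``$\omega|_{\partial X}=0$'' one must show that the restriction $\omega|_{\partial X}$ never has a component in the meridian summand $H^1(T^2_i)\otimes H^1(S^1_i)\subset H^2(T^3_i)$, equivalently that the cycles $\alpha\times S^1_i$ are null-homologous in $X$ for every $\alpha\in H_1(T^2_i)$.

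This obstacle is genuine, and your proposed Mayer--Vietoris resolution does not work. Take $\overline{X}=S^1\times S^3$ and $L=S^1\times\gamma$ for a knot $\gamma\subset S^3$; then $[L]^2=0$ and $X\simeq S^1\times(S^3\setminus\nu(\gamma))$. Here $H^2(X;\mathbb{R})\cong\mathbb{R}$, generated by $\omega=a^*\wedge m^*$ with $a^*$ dual to the $S^1$-factor and $m^*\in H^1(S^3\setminus\gamma)$ the meridian class. The pushoff torus $T'\subset\partial X$ is $S^1_a\times\lambda$ with $\lambda$ the $0$-framed longitude, so $\langle\omega,[T']\rangle=\langle m^*,[\lambda]\rangle=0$; yet $\omega|_{\partial X}=a^*\wedge\mu^*\neq 0$ lies entirely in the meridian summand. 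Equivalently, $\iota_*[a\times\mu]$ is the generator of $H_2(X)\cong\mathbb{Z}$, not zero. In this example $\operatorname{Im}(H^2_c(X)\to H^2(X))=0$ while $\{\omega:\langle\omega,[T_i]\rangle=0\}=H^2(X)\cong\mathbb{R}$, so the two sides of the proposition disagree.

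Thus the reverse inclusion cannot be salvaged in full generality, and the paper's one-line ``diagram chase'' has the same gap. The easy inclusion $\operatorname{Im}(H^2_c(X)\to H^2(X))\subset\{\omega:\langle\omega,[T_i]\rangle=0\}$ does always hold, and together with Zucker's identification $H^2_{(2)}(X,g)\cong\operatorname{Im}(H^2_c(X)\to H^2(X))$ this is all that is actually used downstream: the paper produces $L^2$ representatives of $c_1(\mathfrak{s})$ not via this proposition but by choosing $A_0$ with $F_{A_0}$ compactly supported, which lands directly in $H^2_c(X)$.
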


\subsection{$L^2$ Chern-Weil theory}
Suppose that $(X,g)$ is a complete, finite volume 4 manifold. We extend Chern-Weil theory into the setting $L^2$ forms to define the $L^2$-Chern class of complex vector bundles. Suppose that $\mathcal{L}$ is a complex line bundle over $X$, and $A$ is a connection on $\mathcal{L}$ so that $F_A \in L^2\Omega^2(X,g)$. Then, we can define the $L^2$ Chern class of $L$ to be given by
\begin{equation*}
    c_1(\mathcal{L}) = \frac{i}{2\pi}[F_A]_{L^2}
\end{equation*}
On complete manifolds, the $L^2$ Chern class is invariant under $L^2_1$ perturbations of the underlying connection. More precisely, if $\alpha \in L^2_1\Omega^2(X,g)$, then it follows that 
\begin{equation*}
    [F_A]_{L^2} = [F_{A + a}]_{L^2}.
\end{equation*}
In particular, we have the equality
\begin{equation*}
    \begin{split}
        \int_X F_{A + a} \wedge F_{A + a} &= \int_X F_A \wedge F_A + da \wedge (2 F_A + da)\\
        &= \int_X F_A \wedge F_A
    \end{split}
\end{equation*}
where the second equality follows by considering a sequence $a_n \in C^\infty_c \Omega^1(X)$ with $a_n \xrightarrow{L^2_1} a$. Convergence and Stokes theorem on a finite truncation imply that
\begin{equation*}
    \int_X da \wedge (2 F_A + da) = \lim_{n\rightarrow\infty }\int_X da_n \wedge (2 F_A + da) = 0 
\end{equation*}
In fact we can do this more explicitly using the assumption that $c_1(\mathfrak{s})[T_i] = 0$. We simply choose a base connection $A_0$ so that $F_{A_0}\lvert_{\nu \Sigma} = 0$ on some tubular neighborhood of $\Sigma$. This ensures that there is a uniform bound $C$ so that $\| F_{A_0}\|_{g_j} \le C$ for all metrics $g_j$. It then follows that for all $spin^c$ connections $A$ on $\mathfrak{s}$ that differ from $A_0$ by an $L^2_1$ form,
\begin{equation*}
    \int_{X} F_A \wedge F_A = \int_X F_{A_0} \wedge F_{A_0}
\end{equation*}
so the self pairing of the $L^2$ Chern class of $\mathfrak{s}$ on $X$ agrees with the self pairing of the standard Chern class of $\mathfrak{s}$ on $\overline{X}$. By the Hodge-Kodaira decomposition, 
\begin{lemma}
    Suppose that $\omega \in L^2\Omega^2(X,g)$, and let $\gamma \in \mathcal{H}^2(X,g)$ be the harmonic representative for the class $[\omega]$. Then
    \begin{equation*}
        \int_X \lvert \omega \rvert^2 \ge \int_X \lvert \gamma \rvert^2
    \end{equation*}
    Let $\omega^+$ and $\gamma^+$ denote the self-dual components. Then we also have that
    \begin{equation*}
        \int_X \lvert \omega^+ \rvert^2 \ge \int_X \lvert \gamma^+ \rvert^2
    \end{equation*}
\end{lemma}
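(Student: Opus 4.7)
The plan is to apply the Hodge-Kodaira decomposition directly. Since $g$ is a complete metric, we may write $\omega = \gamma + d\alpha + d^*\beta$ as a sum of three mutually $L^2$-orthogonal terms, with $\gamma \in \mathcal{H}^2_{(2)}(X,g)$ the harmonic representative of $[\omega]$. The Pythagorean identity
\begin{equation*}
\|\omega\|_{L^2}^2 = \|\gamma\|_{L^2}^2 + \|d\alpha\|_{L^2}^2 + \|d^*\beta\|_{L^2}^2
\end{equation*}
then gives the first inequality at once.

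For the self-dual refinement one must be slightly more careful, since $(d\alpha)^+$ need not lie in $\overline{d\Omega^1}$. The key observation I would establish first is that $\gamma^+$ is itself a harmonic $L^2$ form. Since $\gamma$ is harmonic we have $d\gamma = 0$ and $d*\gamma = 0$; writing $*\gamma = \gamma^+ - \gamma^-$ and $\gamma = \gamma^+ + \gamma^-$ and adding/subtracting these two equations gives $d\gamma^\pm = 0$. Using $d^* = -{*}d{*}$ on 2-forms in dimension 4, together with $*\gamma^\pm = \pm\gamma^\pm$, this also forces $d^*\gamma^\pm = 0$, so $\gamma^\pm \in \mathcal{H}^2_{(2)}(X,g)$.

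With this in hand, I would exploit that for any 2-form $\eta$ the pointwise orthogonality of the self-dual and anti-self-dual bundles gives $\langle \gamma^+,\eta\rangle_{L^2} = \langle \gamma^+,\eta^+\rangle_{L^2}$. Applying this with $\eta = d\alpha$ and $\eta = d^*\beta$, and combining with the Hodge-Kodaira orthogonality of $\gamma^+ \in \mathcal{H}^2_{(2)}$ against both $\overline{d\Omega^1}$ and $\overline{d^*\Omega^3}$, yields $\langle \gamma^+,(d\alpha)^+\rangle = \langle \gamma^+,(d^*\beta)^+\rangle = 0$. Therefore $\omega^+ = \gamma^+ + (d\alpha + d^*\beta)^+$ is an $L^2$-orthogonal decomposition, and a final application of Pythagoras gives $\|\omega^+\|_{L^2}^2 \geq \|\gamma^+\|_{L^2}^2$.

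There is no serious obstacle. The only substantive inputs are the completeness of $g$, needed to invoke the Hodge-Kodaira decomposition, and the classical fact that the Hodge star preserves harmonicity in middle degree, which is precisely what makes the self-dual refinement reduce to an orthogonality argument identical to the unrefined case.
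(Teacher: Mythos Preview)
Your argument is correct and is exactly the approach the paper has in mind: the paper simply writes ``By the Hodge-Kodaira decomposition,'' before stating the lemma and gives no further details, so you have supplied precisely the computation that was left implicit. One minor point: the Hodge-Kodaira decomposition as stated gives $\omega = \gamma + \eta_1 + \eta_2$ with $\eta_1 \in \overline{d\Omega^1}$ and $\eta_2 \in \overline{d^*\Omega^3}$ rather than literally $d\alpha$ and $d^*\beta$, but your orthogonality argument goes through verbatim for elements of the closures, so this is purely notational.
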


\section{Apriori Estimates on 1-forms} \label{1forms}
In this section we derive various $L^2$ estimates on 1-forms satisfying gauge fixing conditions that will be important later in performing the Seiberg-Witten bootstrap. We follow the approach of Biquard and di Cerbo. The first step is a uniform Poincar\'{e} inequality for functions over the family of metrics $g_j$. We begin with the lemma
\begin{lemma}
    Suppose that $g = dt^2 + g_t$ on the half cylinder $[0,\infty) \times Y$, and suppose that for some constant  $h_0 > 0$, the mean curvature $h$ satisfies 
    \begin{equation*}
        h = \frac{-\partial_t dvol_{g_t}}{2 dvol_{g_t}} \ge h_0
    \end{equation*}
    Then, for any function $f$, we have 
    \begin{equation*}
        \frac{1}{h_0}\int_{[0,T] \times Y} \lvert \partial_t f \rvert^2 \ge h_0 \int_{[0,T] \times Y} \lvert f \rvert^2 + \int_{t = T} \lvert f \rvert^2 - \int_{t=0} \lvert f \rvert^2
    \end{equation*}
    for all $T \in [0,\infty)$. 
\end{lemma}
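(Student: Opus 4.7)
The plan is to reduce the claimed inequality to an integration-by-parts identity in the $t$ variable, with the mean curvature hypothesis controlling the boundary terms generated by differentiating the volume form $\mathrm{dvol}_{g_t}$.

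First, I would define the slice energy
\begin{equation*}
F(t) \;=\; \int_{\{t\} \times Y} \lvert f \rvert^{2}\, \mathrm{dvol}_{g_t}.
\end{equation*}
Differentiating under the integral and using the assumption $\partial_t \mathrm{dvol}_{g_t} = -2h\,\mathrm{dvol}_{g_t}$ gives
\begin{equation*}
F'(t) \;=\; \int_{\{t\}\times Y} \bigl( 2 f\,\partial_t f - 2h\,\lvert f \rvert^{2} \bigr)\, \mathrm{dvol}_{g_t}.
\end{equation*}
Integrating from $0$ to $T$ and using the fundamental theorem of calculus yields the identity
\begin{equation*}
\int_{t=T} \lvert f \rvert^{2} - \int_{t=0} \lvert f \rvert^{2} + 2 \int_{[0,T]\times Y} h\,\lvert f \rvert^{2} \;=\; 2 \int_{[0,T]\times Y} f\,\partial_t f.
\end{equation*}

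Next, I would invoke the hypothesis $h \ge h_0$ to replace $h$ on the left by $h_0$, obtaining the inequality
\begin{equation*}
\int_{t=T} \lvert f \rvert^{2} - \int_{t=0} \lvert f \rvert^{2} + 2 h_0 \int_{[0,T]\times Y} \lvert f \rvert^{2} \;\le\; 2 \int_{[0,T]\times Y} f\,\partial_t f.
\end{equation*}
To the right hand side I would then apply the weighted Cauchy--Schwarz (or Young) inequality with weight $h_0$, namely $2 f\,\partial_t f \le h_0 \lvert f \rvert^{2} + h_0^{-1}\lvert \partial_t f \rvert^{2}$, which gives
\begin{equation*}
\int_{t=T} \lvert f \rvert^{2} - \int_{t=0} \lvert f \rvert^{2} + 2 h_0 \int_{[0,T]\times Y} \lvert f \rvert^{2} \;\le\; h_0 \int_{[0,T]\times Y} \lvert f \rvert^{2} + \frac{1}{h_0} \int_{[0,T]\times Y} \lvert \partial_t f \rvert^{2}.
\end{equation*}
Moving the $h_0 \int \lvert f \rvert^{2}$ term to the left recovers exactly the claimed estimate. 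The calculation as stated is formal; for rigor one first carries out everything for $f \in C^\infty_c([0,T]\times Y)$ (or $f$ smooth and bounded with bounded $\partial_t f$, which is all the later application requires) and then extends by density to the natural Sobolev completion.

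The only step that requires any real care is the choice of weight $h_0$ in the Young inequality, because this is what makes the $h_0 \int \lvert f \rvert^{2}$ and $h_0^{-1} \int \lvert \partial_t f \rvert^{2}$ constants in the final inequality line up exactly; any other weight would produce a strictly weaker estimate. I do not expect a genuine analytic obstacle here — the signs and boundary terms are forced by the positivity of the mean curvature, and the inequality is essentially the standard Poincaré estimate for a weighted half-line with exponentially shrinking cross-sections, made uniform in the family $g_j$ once the uniform lower bound $h_{\tilde{g}_j} \ge 3/2$ from \eqref{MeanCurvatureComputation} is substituted.
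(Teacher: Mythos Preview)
Your proof is correct and follows essentially the same argument as the paper: differentiate the slice integral $\int_{\{t\}\times Y}\lvert f\rvert^{2}\,\mathrm{dvol}_{g_t}$ (equivalently, integrate by parts in $t$), use $h\ge h_0$, and then apply the weighted Young inequality $2\lvert f\,\partial_t f\rvert \le h_0\lvert f\rvert^{2}+h_0^{-1}\lvert\partial_t f\rvert^{2}$. The paper's proof is just a terser version of what you wrote.
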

\begin{proof}
    We integrate by parts to obtain
    \begin{equation*}
        \int_{t = T} \lvert f \rvert^2 dvol_{g_0} - \int_{t=0} \lvert f \rvert^2 dvol_{g_T} = \int_0^T 2 f \partial_t f + f^2 \frac{\partial_t dvol_{g_t}}{dvol_{g_t}} dvol_{g_t}
    \end{equation*}
    By Cauchy-Schwarz, we have the pointwise inequality $2\lvert f \partial_t f \rvert \le h_0 f^2 + h_0^{-1} (\partial_t f\rvert^2$, and the result is immediate. 
\end{proof}
For the model metrics $\Tilde{g_j}$, we can more explicitly set $h_0 = \frac{3}{2}$ to get a uniform constant. And the family of metrics $g_j$ are quasi-isometric to the model metrics $\Tilde{g_j}$, so we can get a similar estimate by weakening the constant factor. 
\begin{corollary}
    For the family of metrics $(\overline{X},g_j)$, there exists a constant $c$, uniform in $j$ so that for all functions $f$ with $\int_{\overline{X}} f dvol_{g_j} =0$
    \begin{equation*}
        \int_{\overline{X}} \lvert df\rvert^2_{g_j} dvol_{g_j} \ge c \int_{\overline{X}} f^2 dvol_{g_j} 
    \end{equation*}
\end{corollary}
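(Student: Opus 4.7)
I would prove this by combining the half-cylinder estimate of the preceding lemma with a Rellich compactness argument on the fixed compact core, via contradiction.

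Write $\overline{X} = K \cup E$, where $K$ is the compact core on which $g_j \equiv g$ for every $j \ge 1$, and $E = \bigsqcup_\alpha E_\alpha$ is the union of the pinched ends, each equipped with a metric uniformly quasi-isometric to the model $\tilde g_j$. Applying the preceding lemma on each end with $T = T_j$ and $h_0 = 3/2$, and noting that the boundary term at $t = T_j$ vanishes because $\phi_j(T_j) = 0$ forces the cap volume form to degenerate, yields a uniform estimate
\begin{equation*}
    \int_{E} \lvert f \rvert^2 \, dvol_{g_j} \le A \int_{E} \lvert df \rvert_{g_j}^2 \, dvol_{g_j} + B \int_{\partial K} \lvert f \rvert^2 \, dvol_g,
\end{equation*}
with constants $A, B$ independent of $j$ and $f$. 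Combined with the standard trace inequality on the fixed compact $(K, g)$, this reduces the corollary to a uniform Poincar\'e-type bound on the core.

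Suppose for contradiction that no such $c > 0$ exists. Then along a subsequence $j_n \to \infty$ there exist $f_n$ with $\int_{\overline X} f_n \, dvol_{g_{j_n}} = 0$, $\|f_n\|_{L^2(g_{j_n})} = 1$, and $\|df_n\|_{L^2(g_{j_n})} \to 0$. Since $g_{j_n}|_K = g$ is fixed, Rellich on $(K, g)$ extracts a further subsequence with $f_n|_K \to f_\infty$ in $L^2(K)$ and weakly in $H^1$; the vanishing of $df_\infty$ forces $f_\infty$ to be constant on each connected component of $K$. The end estimate together with the trace inequality yields $\limsup_n \|f_n\|^2_{L^2(E, g_{j_n})} \le B' \|f_\infty\|^2_{L^2(\partial K)}$, and together with $\|f_n\|_{L^2} = 1$ this forces $f_\infty$ to be nonzero on at least one component of $K$.

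The contradiction is extracted from the mean-zero constraint. Decompose $f_n$ fiberwise on each end as $f_n(t, y) = \bar f_n(t) + f_n^\perp(t, y)$, where $\bar f_n(t)$ is the $T^3$-slice average. Since $\psi_j, \phi_j \le 1$ on $[0, T_j]$, the first nonzero Laplace eigenvalue on each $(T^3_t, g_j|_t)$ is bounded below uniformly in $(j, t)$, so $\|f_n^\perp\|^2_{L^2(E)} \le C \|df_n\|^2 \to 0$. Applying the preceding lemma to $\bar f_n$, viewed as a function on $E$ constant in the fiber, gives a 1D bound whose boundary value $\bar f_n(0)$ converges to the trace of $f_\infty$ on $\partial K$. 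Substituting into $\int_K f_n + \int_E f_n = 0$ with $\int_E \bar f_n \, dvol_{g_j} = Vol(T^3) \int_0^{T_j} \bar f_n(t) \psi_j^2 \phi_j \, dt$ forces the limiting constant value of $f_\infty$ to vanish, contradicting $\|f_n\|_{L^2} = 1$. The main obstacle is this last quantitative step: tracking the warped 1D integral across the long interval $[0, T_j]$ as $j \to \infty$ requires careful use of the lemma together with the explicit form of the approximations $\psi_j, \phi_j$ from Section \ref{MetricApprox}.
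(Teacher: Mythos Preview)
Your overall architecture---contradiction, Rellich compactness on the fixed core, and the preceding lemma on the ends---is exactly the paper's. The end estimate you extract from the lemma (with the $t=T_j$ boundary term vanishing) is correct, and your step ``$f_\infty\ne 0$ or else $\|f_n\|\to 0$'' is the same as the paper's final contradiction, just phrased in the contrapositive.

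Where you diverge is in how you force $f_\infty=0$ from the mean-zero constraint. Your fiberwise decomposition $f_n=\bar f_n+f_n^\perp$, the slice-eigenvalue bound, and the tracking of the warped 1D integral $\int_0^{T_j}\bar f_n(t)\psi_j^2\phi_j\,dt$ are all unnecessary, and you correctly flag this step as an obstacle you have not closed. The paper replaces all of it with a one-line volume argument: for any compact $K\subset X$,
\[
\Bigl|\int_K f_\infty\,dvol_g\Bigr|
=\lim_n\Bigl|\int_K f_n\,dvol_{g_{j_n}}\Bigr|
=\lim_n\Bigl|\int_{\overline{X}\setminus K} f_n\,dvol_{g_{j_n}}\Bigr|
\le \limsup_n \sqrt{Vol(\overline{X}\setminus K,\,g_{j_n})},
\]
using mean zero in the middle equality and Cauchy--Schwarz with $\|f_n\|_{L^2}=1$ at the end. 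Since the total volumes $Vol(\overline{X},g_j)$ are uniformly bounded and the tail volumes go to zero uniformly as $K$ exhausts $X$, the right side can be made arbitrarily small; hence the constant $f_\infty$ vanishes. No fiber decomposition, no 1D ODE analysis.

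Once $f_\infty=0$ is established this way, the paper finishes exactly as your step 4 would predict: take a cutoff $\chi$ supported in $X$ with $\chi\equiv 1$ on $K$; Rellich gives $\|\chi f_n\|_{L^2}\to 0$, and the lemma applied to $(1-\chi)f_n$ (whose inner boundary term now vanishes) gives $\|(1-\chi)f_n\|_{L^2}\to 0$, contradicting $\|f_n\|=1$. So your proposal is on the right track but overcomplicates the one genuinely delicate step; the paper's volume trick is the missing idea.
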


\begin{proof}
    We proceed by contradiction. Suppose that there exists a sequence of functions $f_j$ with 
    \begin{equation*}
        \int_{\overline{X}} f_j dvol_{g_j} = 0, \|f_j\|_{g_j} = 0,
    \end{equation*}
    and so that $\|df_j\|_{g_j} \rightarrow 0$. Then it follows that $f_j$ is bounded in $L^2_1$ and thus has a weak limit $f$. Note that $\|df\|_{g} = 0$, so $f$ must be the constant function.

    Let $K \subset X$ be a compact subset. Then we note that
    \begin{equation*}
        \int_{K} f dvol_{g}= \lim_{j\rightarrow\infty} \int_{K} f_j dvol_{g_j}= \lim_{j\rightarrow\infty} \int_{\overline{X} - K} f_j dvol_{g_j}
    \end{equation*}
    But 
    \begin{equation*}
        \left\lvert \int_{\overline{X} - K} f_j \right\rvert \le \|f_j\|_{L^2(g_j)}\sqrt{Vol(\overline{X}- K)}
    \end{equation*}
    It follows that 
    \begin{equation*}
        \left\lvert\int_{K} f dvol_{g}\right\rvert \le  \sqrt{Vol(\overline{X}- K)}
    \end{equation*}
    Since for all $\varepsilon$, we can find a $K$ so that $Vol(\overline{X}- K) \le \varepsilon$, it follows that $f = 0$. Now choose $K\subset X$ to be compact with $\{0\} \times Y \subset K$, and let $\chi$ be a bump function with compact support in $X$, with $\chi\lvert_K = 1$. Since $\chi f_j$ have compact support and are bounded in $L^2_1$, it follows by Rellich embedding that $\|\chi f_j\|^2_{L^2(g)} \rightarrow 0$. On the other hand, the lemma tells us that there is a constant $C$ so that
    \begin{equation*}
        \begin{split}
            C \int_X \lvert (1-\chi) f_j \rvert^2 dvol_{g_j} &\le \int_X \lvert d (1-\chi) f_j \rvert^2 dvol_{g_j}\\
            & \le \int_X \lvert d f_j \rvert^2 + \lvert (d\chi_j) f_j \rvert^2 dvol_{g_j}
        \end{split}
    \end{equation*}
    By assumption $\|df_j\|^2_{L^2(g_j)} \rightarrow0$ and $\| (d\chi_j) f_j \|^2_{L^2(g_j)} \rightarrow 0$ again by the Rellich lemma. In light of the decomposition $f_j = \chi f_j + (1-\chi)f_j$, this contradicts $\|f_j\|_{L^2(g_j)} = 1$.
\end{proof}
Our next goal is to prove a similar uniform Poincar\'{e} inequality for 1-forms. We first need to understand convergence of harmonic 1-forms. 
\begin{proposition}
    Let $[a]\in H^1_{dR}(\overline{X})$ be a cohomology class and let $\alpha_j \in \mathcal{H}^1(X,g_j)$ be the corresponding harmonic representatives. Then $\alpha_j$ converges in $C^\infty_C(X)$ to a harmonic form $\alpha \in L^2\Omega^1(X)$.
\end{proposition}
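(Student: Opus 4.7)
The strategy is to establish a uniform $L^2$ bound on the $\alpha_j$, extract a $C^\infty_{\mathrm{loc}}$-limit on $X$ by elliptic regularity on compact subsets where the metric is stationary, and then upgrade subsequential to full convergence via $L^2$-Hodge uniqueness. First I would fix a smooth closed 1-form $a$ on the compact manifold $\overline X$ representing $[a]$. Since $\alpha_j$ minimizes the $L^2(g_j)$-norm in its cohomology class, $\|\alpha_j\|_{L^2(g_j)} \le \|a\|_{L^2(g_j)}$. Using the uniform upper bound on $\mathrm{Vol}(\overline X, g_j)$ from Section~\ref{MetricApprox} together with a direct computation on the pinching region (where, although the pointwise $g_j$-norm of $a$ degenerates like $\phi_j^{-1}$ in the collapsing $S^1$-direction, the $d\theta$-component of any smooth 1-form on the Dehn-filled cap vanishes to first order at the core torus and cancels the degeneration), one checks that $\|a\|_{L^2(g_j)} \le C$ uniformly in $j$, and hence the same bound holds for $\alpha_j$.

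Next, for any compact $K \Subset X$, the construction of $g_j$ via the cutoff $\chi_j$ gives $g_j \equiv g$ on $K$ for all $j$ sufficiently large, so $(d + d^*_g)\alpha_j = 0$ on $K$. Standard elliptic regularity for the Hodge Laplacian on a slightly larger compact set $K' \Supset K$ yields uniform $C^k(K)$ bounds on $\alpha_j$ in terms of $\|\alpha_j\|_{L^2(K', g)}$, which is controlled by the previous step. An exhaustion of $X$ by compact sets, Arzela-Ascoli, and a diagonal argument then produce a subsequence converging in $C^\infty_{\mathrm{loc}}(X)$ to a smooth $g$-harmonic 1-form $\alpha$. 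Lower semicontinuity of the $L^2$-norm under this convergence, applied piece-by-piece over the exhaustion, gives $\int_X |\alpha|^2 \, dvol_g \le \liminf_j \|\alpha_j\|_{L^2(g_j)}^2 \le C$, so $\alpha \in L^2\Omega^1(X,g)$.

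To promote subsequential to full convergence, I would observe that for any smoothly embedded 1-cycle $\gamma \subset X$ both $\gamma$ and a tubular neighborhood lie in the region where $g_j = g$ for $j$ large, so the period $\int_\gamma \alpha = \lim_j \int_\gamma \alpha_j = \langle [a], [\gamma]\rangle$; thus $[\alpha]\in H^1(X)$ equals the image of $[a]$ under restriction. Under the isomorphism $H^1_{(2)}(X,g) \cong H^1(X)$ supplied by Zucker's theorem and Gaffney's Hodge-Kodaira decomposition (Section~\ref{L2cohomology}), the $L^2$-harmonic representative in this class is unique, so any two subsequential limits coincide and the entire sequence converges. The main obstacle I anticipate is verifying the uniform $L^2$ bound in the first step: one must carefully track how the metric degeneration along the collapsing $S^1$ interacts with smooth 1-forms on the Dehn-filled manifold. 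The remaining ingredients---elliptic bootstrap on the stationary region, Fatou, and Hodge uniqueness---are standard.
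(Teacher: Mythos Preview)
Your approach is correct and takes a genuinely different route from the paper. The paper writes $\alpha_j = \beta + df_j$ with $\int f_j = 0$, integrates by parts to get $\|df_j\|^2 \le \|f_j\|\,\|d^{*_j}\beta\|$, and then invokes the uniform Poincar\'e inequality for functions (the corollary proved immediately before this proposition) to bound $\|f_j\|$; it finishes by bootstrapping the scalar equation $\Delta^{g_j} f_j = -d^{*_j}\beta$ on compact sets where $g_j = g$. You instead bypass the function Poincar\'e inequality entirely, using the variational characterization $\|\alpha_j\|_{L^2(g_j)} \le \|a\|_{L^2(g_j)}$ and bootstrapping $\alpha_j$ directly via $(d + d^{*_g})\alpha_j = 0$ on the stationary region. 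Your route is more self-contained; the paper's has the virtue of exercising the Poincar\'e machinery it has just built and of reducing to the scalar Laplacian. You also go further than the paper by explicitly upgrading subsequential to full convergence via $L^2$-Hodge uniqueness on $(X,g)$; the paper extracts only a subsequence, which suffices for the uniform $L^2_k$ bounds it actually uses downstream.

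One remark on the obstacle you flag: the $T^2$-direction of the cusp also collapses ($\psi_j \to 0$), so the pointwise $g_j$-norm of a generic smooth $a$ blows up there too, and unlike the $d\theta$-component there is no first-order vanishing at the core torus to cancel it. What rescues the $L^2$-norm is the volume factor: $|dx_i|_{g_j}^2\, dvol_{g_j} = \phi_j\, dt\, d\theta\, dvol_{T^2}$, and the $t$-integral of $\phi_j$ over the pinching region tends to zero. Alternatively---and this is effectively what the paper does when it asserts that $\|\beta\|_{L^2(g_j)}$ and $\|d^{*_j}\beta\|_{L^2(g_j)}$ are uniformly bounded---one may simply choose the representative so that on each Dehn-filled end it is the pullback of a constant-coefficient $1$-form from $T^2$, for which these bounds are immediate.
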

This proposition ensures that for any compact subset $K\subset X$, $L \subset H^1(\overline{X})$, there exists a uniform $L^2_k$ bound on $g_j$-harmonic representatives of cohomology classes lying in $L$, depending only on $k$, $K$, and $L$.
\begin{proof}
    Fix a representative $\beta$ for $[a]$ on $\overline{X}$, and write
    \begin{equation*}
        \alpha_j = \beta + df_j
    \end{equation*}
    for $f_j$ satisfying
    \begin{equation*}
        \begin{cases}
            \int_{\overline{X}} f_j = 0\\
            \Delta^{g_j} f_j = -d^{*_j} \beta
        \end{cases}
    \end{equation*}
    Via integration by parts, we have
    \begin{equation*}
        \| df_j \|^2_{L^2(g_j)} =  - \int_{\overline{X}} f_j d^{*_j}\beta dvol_{g_j} \le \|f_j\|_{L^2(g_j} \|d^{*_j}\beta\|_{L^2(g_j)}
    \end{equation*}
    By construction of the metrics $g_j$, the norms of $\|\beta\|_{L^2(g_j)}$ and $\|d^{*_j}\beta\|_{L^2(g_j)}$ are both uniformly bounded in $j$. By lemma 3.2, we have that
    \begin{equation*}
        c\|f_j\|_{L^2(g_j}^2 \le \| df_j \|^2_{L^2(g_j)} \le \|f_j\|_{L^2(g_j} \|d^{*_j}\beta\|_{L^2(g_j)}
    \end{equation*}
    Thus, it follows that we have bounds $\|f_j\|_{L^2(g_j} \le c^{-1} \|d^{*_j}\beta\|_{L^2(g_j)}$, and so by elliptic bootstrapping, we can find a $C^\infty_c$ convergent subsequence of $f_j$ with limit $f$ satisfying
    \begin{equation*}
        \begin{cases}
            \int_{\overline{X}} f = 0\\
            \Delta^{g} f = -d^{*} \beta
        \end{cases}
    \end{equation*}
    And it follows that $\alpha = \beta + df$ is our desired harmonic representative. 
\end{proof}
We also need to show a uniform Poincar\'{e} inequality on the cusp ends of the approximating metrics. 
\begin{lemma}
    For the model metrics $\tilde{g_j}$ on $T^3 \times [0,T_j)$, we have the estimate for all 1-forms $\alpha$ with support in $[t_1,t_2] \subset [0,T_j)$, 
    \begin{equation*}
        \int_{[t_1,t_2] \times Y} \lvert d\alpha \rvert^2 + \lvert d^{*_j}\alpha \rvert^2 dvol_{\Tilde{g}} \ge c \int_{[t_1,t_2] \times Y} \lvert \alpha \rvert^2dvol_{\Tilde{g}}
    \end{equation*}
\end{lemma}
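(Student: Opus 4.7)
The plan is to decompose $\alpha$ according to Hodge theory on each $T^3$ slice and then prove a uniform Poincar\'e inequality for each resulting piece. Writing $\alpha = f \, dt + \beta$ with $\beta$ tangent to the $T^3$ factor, and letting $g_t$ denote the restriction of $\Tilde{g}_j$ to the $T^3$ slice at height $t$, I would split $\beta(t, \cdot) = \beta_h(t, \cdot) + \beta'(t, \cdot)$, where $\beta_h$ lies in the space of $g_t$-harmonic 1-forms on $T^3$ (spanned by the constant forms $d\theta, dx_1, dx_2$, a subspace which is independent of $t$) and $\beta'$ is its $L^2(g_t)$-orthogonal complement; similarly decompose $f = \bar f(t) + f'(t, \cdot)$ into slice average and mean-zero part. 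Grouping gives $\alpha^A = \bar f \, dt + \beta_h$ and $\alpha^{BC} = f' \, dt + \beta'$. Using the $t$-independence of the harmonic basis on $T^3$, one checks by direct integration by parts on each slice that $\alpha^A$ and $\alpha^{BC}$ are $L^2$-orthogonal and that all cross terms between them in $\int(|d\alpha|^2 + |d^*\alpha|^2)\,dvol_{\Tilde{g}}$ vanish, so it suffices to prove a uniform Poincar\'e for each piece separately.

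For $\alpha^A$, the problem reduces to four one-dimensional weighted Poincar\'e inequalities for $\bar f(t)$ and the scalars $a(t), b(t), c(t)$ (where $\beta_h = a\, d\theta + b\, dx_1 + c\, dx_2$). After substitutions like $\tilde a = a/\phi_j$, integration by parts using the estimates \ref{ApproxAssumptions} and \ref{ApproxEstimates} converts each into a 1D Poincar\'e inequality with respect to a weighted measure of the form $\phi_j \psi_j^2\, dt$; these follow from the previous lemma, which applies thanks to the uniform lower bound $h \ge 3/2$ on the mean curvature.

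For $\alpha^{BC}$, I would exploit the spectral gap of the Hodge Laplacian on 1-forms of $(T^3, g_t)$. Since $\phi_j, \psi_j \le 1$ throughout the warped cusp, the smallest nonzero eigenvalue is bounded below by $\min(1/\phi_j^2, 1/\psi_j^2) \ge 1$ uniformly in $j$ and $t$, yielding slicewise estimates
\begin{equation*}
    \int_{T^3}\bigl(|d_{T^3}\beta'|^2 + (d^*_{T^3}\beta')^2\bigr)\,dvol_{g_t} \ge \int_{T^3}|\beta'|^2\,dvol_{g_t}, \qquad \int_{T^3}|d_{T^3}f'|^2\,dvol_{g_t} \ge \int_{T^3}(f')^2\,dvol_{g_t}.
\end{equation*}
Combining these with the nonnegative $t$-derivative contributions $|\partial_t\beta'|^2$ and $|\partial_t f' + 2h f'|^2$ in the full energy, and applying Cauchy-Schwarz with carefully chosen small weights to absorb the residual off-diagonal terms of the form $\int \langle \partial_t \beta', d_{T^3}f'\rangle$ and $\int d^*_{T^3}\beta' \cdot (\partial_t f' + 2h f')$, should deliver the desired uniform Poincar\'e for $\alpha^{BC}$.

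The main technical obstacle I foresee is the bookkeeping of these cross terms for $\alpha^{BC}$: unlike the $A$/$BC$ cross terms, they do not vanish for purely algebraic reasons and must be absorbed quantitatively using the spectral gap. The uniform lower bound $\min(1/\phi_j^2, 1/\psi_j^2) \ge 1$, a consequence of the monotone behavior of $\phi_j$ and $\psi_j$ in the construction, provides exactly the slack needed to make this absorption work uniformly in $j$.
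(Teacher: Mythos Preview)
Your approach is correct in spirit but takes a genuinely different route from the paper, and the extra work you anticipate for $\alpha^{BC}$ is in fact entirely avoidable.

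The paper does \emph{not} Fourier-decompose along the $T^3$ slices. Instead it applies the Bochner formula $\int |d\alpha|^2 + |d^*\alpha|^2 = \int |\nabla\alpha|^2 + Ric(\alpha,\alpha)$ directly to an arbitrary compactly supported $1$-form. Writing $\alpha = f\,dt + \beta$ and expanding $\nabla\alpha$ via the second fundamental form of the slices, the cross terms cancel because $\mathrm{I\!I}$ is parallel on each slice, leaving $\int|\nabla_{\partial_t}\alpha|^2 + |\nabla^Y\beta|^2 + |d_Yf|^2 + |\mathrm{I\!I}(\beta,\cdot)|^2 + |f\,\mathrm{I\!I}|^2 + Ric(\alpha,\alpha)$. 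The three curvature inequalities \eqref{ApproxEstimates} then show $Ric(\alpha,\alpha) + |\mathrm{I\!I}(\beta,\cdot)|^2 + |f\,\mathrm{I\!I}|^2 \ge -2|\alpha|^2$, and the mean-curvature bound $h\ge 3/2$ together with Kato gives $\int|\nabla_{\partial_t}\alpha|^2 \ge \tfrac{9}{4}\int|\alpha|^2$; the net is $\tfrac14\int|\alpha|^2$. No spectral gap, no mode splitting.

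Your zero-mode computation for $\alpha^A$ is in fact exactly this Bochner argument unwound: the substitutions $\tilde a = a/\phi_j$, $\tilde b = b/\psi_j$ and the subsequent integration by parts reproduce precisely the three combinations in \eqref{ApproxEstimates}, and your appeal to the previous lemma is the $\int|\nabla_{\partial_t}\alpha|^2\ge \tfrac94\int|\alpha|^2$ step. The point is that this same mechanism works verbatim for \emph{any} $\alpha$, so the separate treatment of $\alpha^{BC}$ via the slice spectral gap is unnecessary. Where your route costs something is exactly where you flag it: the cross terms $\int\langle\partial_t\beta',d_{T^3}f'\rangle$ and $\int d^*_{T^3}\beta'\cdot(\partial_t f' + 2hf')$ do not cancel, and a naive Cauchy--Schwarz gives the spectral-gap term $(d^*_{T^3}\beta')^2$ the wrong sign; moreover $\int|\partial_t\beta'|^2_{g_t}$ is a weighted $t$-Poincar\'e with weights like $\psi_j^2/\phi_j$, which is \emph{not} monotone on the flat part $[j+\varepsilon_j,T_j]$, so the previous lemma does not apply to it directly. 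These difficulties can likely be overcome with more careful bookkeeping, but the Bochner route sidesteps them completely.
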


\begin{proof}
We recall that by the Bochner formula for 1-forms we have
\begin{equation*}
     \int_{[t_1,t_2] \times Y} \lvert d\alpha \rvert^2 + \lvert d^{*_j}\alpha \rvert^2 dvol_{\Tilde{g}} =  \int_{[t_1,t_2] \times Y} \lvert \nabla \alpha \rvert^2 + Ric(\alpha,\alpha) dvol_{\Tilde{g}}
\end{equation*}
Writing $\alpha = \beta + fdt$, with $\beta \in \Omega^1(Y)$, we can decompose the term
\begin{equation*}
    \begin{split}
        \nabla \alpha &= \nabla_{\partial_t} \alpha + d_Yf \otimes dt + f\nabla\rvert_Y dt + \nabla\rvert_Y dt \beta\\
        &= \nabla_{\partial_t} \alpha + d_Yf \otimes dt - f \mathrm{I\!I}(\cdot,\cdot) + \mathrm{I\!I}(\beta,\cdot) \otimes dt
    \end{split}
\end{equation*}
Here $\mathrm{I\!I}$ denotes the second fundamental form of the slices $T^3 \times \{t\}$. It follows that
\begin{equation*}
    \begin{split}
        \int \lvert \nabla \alpha \rvert^2 &= \int \lvert \nabla_{\partial_t} \alpha \rvert^2 + \lvert d_Y f + \mathrm{I\!I}(\beta,\cdot) \rvert^2 + \lvert -f\mathrm{I\!I}(\cdot,\cdot) + \nabla^Y \beta \rvert^2\\
        &= \int \lvert \nabla_{\partial_t} \alpha \rvert^2 + \lvert d_Y f \rvert^2 + \lvert \mathrm{I\!I}(\beta,\cdot) \rvert^2 + \lvert f\mathrm{I\!I}(\cdot,\cdot)\rvert^2 + \lvert \nabla^Y \beta \rvert^2 + 2\langle d_Y f,\mathrm{I\!I}(\beta,\cdot) \rangle - 2\langle f\mathrm{I\!I}(\cdot,\cdot), \nabla^Y \beta \rangle \\.
    \end{split}
\end{equation*}
From \ref{MeanCurvatureComputation}, we can write down the mean curvature computation as
\begin{equation*}
    \frac{-1}{2}\partial_t g_t = \frac{-\partial_t \phi_j}{\phi_j}(\phi_j d\theta)^2 + \frac{-\partial_t \psi_j}{\psi_j} \psi_j^2 g_{T^2}.
\end{equation*}
$\mathrm{I\!I}$ commutes with $\nabla^Y$ because it is constant on the slices, so we can deduce that
\begin{equation*}
    \begin{split}
        \langle \nabla^Y f,\mathrm{I\!I}(\beta,\cdot) \rangle &= \langle f,(\nabla^Y)^* \mathrm{I\!I}(\beta,\cdot) \rangle\\
        &= \langle f,\mathrm{I\!I}(\nabla^Y \beta)\rangle
    \end{split}
\end{equation*}
and the 2 cross terms cancel out. By further decomposing $\alpha = fdt + f_1d\theta + \gamma$, we note that
\begin{equation*}
    \begin{split}
        \lvert \mathrm{I\!I}(\beta,\cdot) \rvert^2 &= \frac{(\partial_t \phi_j)^2}{\phi_j^2} \lvert f_1 d\theta \rvert^2 + \frac{(\partial_t \psi_j)^2}{\psi_j^2} \lvert \gamma\rvert^2\\
        \lvert f \mathrm{I\!I} \rvert^2 &= \left( \frac{(\partial_t \phi_j)^2}{\phi_j^2} + 2  \frac{(\partial_t \psi_j)^2}{\psi_j^2} \right) \lvert fdt \rvert^2
    \end{split}
\end{equation*}
By also plugging in the formula for the Ricci curvature \ref{RicciComputation} from section 2, we can finally deduce that
\begin{equation*}
    \begin{split}
        Ric(\alpha,\alpha) + \lvert \mathrm{I\!I}(\beta,\cdot) \rvert^2 + \lvert f \mathrm{I\!I} \rvert^2 &= \left(\frac{-\partial_t^2 \phi_j}{\phi_j} - 2\frac{\partial^2_t \psi_j}{\psi_j} + \frac{(\partial_t\phi_j)^2}{\phi_j^2} + 2\frac{(\partial_t\psi_j)^2}{\psi_j^2} \right) \lvert fdt \rvert^2 \\
        &+ \left( \frac{-\partial_t^2 \phi_j}{\phi_j} - 2\frac{\partial_t \phi_j}{\phi_j} \cdot \frac{\partial_t \psi_j}{\psi_j} + \frac{(\partial_t\phi_j)^2}{\phi_j^2} \right) \lvert f_1 d\theta\rvert^2\\
        &+ \left( \frac{-\partial^2_t \psi_j}{\psi_j} - \frac{\partial_t \phi_j}{\phi_j} \cdot \frac{\partial_t \psi_j}{\psi_j} \right) \lvert \gamma \rvert^2
    \end{split}
\end{equation*}
But by the metric approximations \ref{ApproxEstimates}, we know that each of the coefficients is bounded from below by $-2$. Thus, we have
\begin{equation*}
    Ric(\alpha,\alpha) + \lvert \mathrm{I\!I}(\beta,\cdot) \rvert^2 + \lvert f \mathrm{I\!I} \rvert^2 \ge -2 \lvert \alpha \rvert^2.
\end{equation*}
But now, because the mean curvature of $g_j$ is bounded from below by $\frac{3}{2}$, it follows that 
\begin{equation*}
    \int_{[t_1,t_2] \times Y} \lvert \nabla_{\partial_t} \alpha \rvert^2 \ge \frac{9}{4} \int_{[t_1,t_2] \times Y} \lvert \alpha \rvert^2
\end{equation*}
Thus, 
\begin{equation*}
    \int_{[t_1,t_2] \times Y} \lvert d\alpha \rvert^2 + \lvert d^{*_j}\alpha \rvert^2 dvol_{\Tilde{g}} \ge \frac{1}{4} \int_{[t_1,t_2] \times Y} \lvert \alpha \rvert^2
\end{equation*}
\end{proof}

Note that the situation here holds also for metrics that are $C^2$-close to such metrics, such as the metrics $g_j$ that we consider. This is because in local coordinates $d^*$ depends only on the metric $g$ and its first derivatives.
\begin{corollary}
    There exists $T > 0$ and $c>0$ so that for all $j \ge T$ and $[t_1,t_2] \subset [T,T_j)$, we have the uniform estimate for the metrics ${g_j}$
    \begin{equation*}
        \int_{[t_1,t_2] \times Y} \lvert d\alpha \rvert^2 + \lvert d^{*_j}\alpha \rvert^2 dvol_{\Tilde{g_j}} \ge c \int_{[t_1,t_2] \times Y} \lvert \alpha \rvert^2dvol_{\Tilde{g_j}}
    \end{equation*}
    For all 1-forms $\alpha$ with support in ${[t_1,t_2] \times Y}$. 
\end{corollary}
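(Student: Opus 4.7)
The plan is to transfer the preceding lemma from the model metrics $\tilde g_j$ to the physical metrics $g_j$ by exploiting uniform $C^2$-closeness on the cusp region. First I would fix $T > 0$ large enough that on each end $[T, \infty) \times T^3$, the asymptotically hyperbolic metric $g$ is so $C^2$-close to the hyperbolic cusp $dt^2 + e^{-2t} g_0$ that all pointwise inputs of the previous Bochner argument---the Ricci lower bound (\ref{ApproxEstimates}), the structure of the second fundamental form (\ref{SecondFundFormComputation}), and the mean curvature bound $h \ge 3/2$ from (\ref{MeanCurvatureComputation})---continue to hold up to an arbitrarily small additive error.

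Then for $j \ge T$, from the definition $g_j = \chi_j g + (1-\chi_j)\tilde g_j$ we have $g_j = g$ on $[T, j] \times T^3$ and $g_j = \tilde g_j$ on $[j+1, T_j) \times T^3$. On the transition window $[j, j+1]$, by choosing $\varepsilon_j < 1$ so that $\tilde g_j$ is still in its hyperbolic regime at $t = j$ and deviates only afterwards, both metrics being interpolated sit $C^2$-close to the common hyperbolic cusp, so the convex combination $g_j$ does too, uniformly in $j$. Consequently the Ricci curvature, second fundamental form, and mean curvature of $g_j$ all satisfy the estimates used in the previous lemma, with the constants $-2$ and $3/2$ replaced by $-2 - \eta$ and $3/2 - \eta'$ for arbitrarily small $\eta, \eta' > 0$.

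With these uniform bounds I would rerun the argument of the previous lemma essentially verbatim, now for $g_j$: the Bochner formula for 1-forms, combined with the decomposition $\alpha = f\,dt + f_1\,d\theta + \gamma$, produces the same cancellation of cross terms and the same Ricci-plus-second-fundamental-form lower bound $\ge -(2+\eta)|\alpha|^2$, while the mean curvature control gives $\|\nabla_{\partial_t}\alpha\|_{L^2}^2 \ge (9/4 - \eta'')\|\alpha\|_{L^2}^2$ through the lemma on $\partial_t$ derivatives. Combining these yields the claimed uniform inequality with $c$ slightly smaller than $1/4$, uniform in $j \ge T$ and in $[t_1, t_2] \subset [T, T_j)$.

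The main obstacle is controlling the convex combination on the transition interval $[j, j+1]$, where a priori the interpolation of two metrics with quite different structures (genuinely asymptotically hyperbolic versus explicitly pinching) could introduce large $C^2$ errors. The resolution sketched above is that both $g$ and $\tilde g_j$ sit $C^2$-close to a common reference hyperbolic cusp on this interval---the former by the choice of $T$, the latter because its deviation from hyperbolic occurs on the shorter window $[j, j+\varepsilon_j]$ and is small---so any convex combination between them inherits the same closeness. Once this is verified no other step of the lemma requires modification, since $d^*_j$ and $\mathrm{Ric}_{g_j}$ depend only on the metric and its first and second derivatives respectively, both controlled by the $C^2$-closeness.
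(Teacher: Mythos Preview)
Your approach is correct and in the same spirit as the paper's: both transfer the model-metric estimate to $g_j$ via $C^2$-closeness of $g_j$ to $\tilde g_j$ on the cusp region. The paper's treatment is in fact briefer and more direct than your sketch---rather than re-running the Bochner computation with perturbed curvature constants, it simply notes that the two sides of the inequality involve only $d$, $d^{*_j}$, pointwise norms, and the volume form, all of which are determined by the metric and its first derivatives, so closeness of $g_j$ to $\tilde g_j$ transfers the inequality itself with a slightly smaller constant. Your Bochner re-run also works, but observe that the exact cancellation of the cross terms in the lemma used the warped-product structure of $\tilde g_j$ (namely that $\mathrm{I\!I}$ is constant on each slice and hence commutes with $\nabla^Y$); for the interpolated metric $g_j$ this holds only approximately, so you would have small residual cross terms to absorb rather than an exact cancellation. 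The paper's direct-transfer viewpoint sidesteps this technicality entirely.
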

This allows us to prove the estimates: 
\begin{proposition}
    For all 1-forms $a$ orthogonal to the space of $g_j$ harmonic 1-forms, there exists a $c$ independent of $j$ so that 
    \begin{equation*}
        \int_{\overline{X}} \lvert da \rvert^2 + \lvert d^{*_{g_j}} a\rvert^2 dvol_{g_j} \ge c \int_{\overline{X}} \lvert a \rvert^2
    \end{equation*}
\end{proposition}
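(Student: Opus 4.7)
I plan to argue by contradiction, in direct analogy with the proof of the preceding corollary for functions. Suppose for contradiction that there is a sequence of 1-forms $a_j$ on $\overline{X}$ with $a_j \perp \mathcal{H}^1(\overline{X},g_j)$, $\|a_j\|_{L^2(g_j)} = 1$, and $\|da_j\|^2_{L^2(g_j)} + \|d^{*_{g_j}} a_j\|^2_{L^2(g_j)} \to 0$. Since the $g_j$ agree with $g$ on the region $[0,j]\times Y$ in each end, G\r{a}rding's inequality for the elliptic operator $d + d^{*}$ gives uniform $L^2_1$ bounds on $a_j$ over any fixed compact $K \subset X$. After extracting a subsequence, $a_j$ converges weakly in $L^2_{1,loc}$ and strongly in $L^2_{loc}$ to a limit $a$ on $X$, which satisfies $(d+d^{*_g})a = 0$ by passing to the limit in the differential equations.

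The next step would be to upgrade this local convergence to a global $L^2$ estimate using the cusp Poincar\'{e} inequality of the previous corollary. Let $\chi$ be a cutoff which vanishes on $[0,T]\times Y$ and equals $1$ on $[T+1,T_j)\times Y$ on each end, where $T$ is as in the corollary. Then $\chi a_j$ is supported in the cusp region, so the corollary combined with the Leibniz rule yields
\[
\|\chi a_j\|^2_{L^2(g_j)} \le C\bigl(\|d a_j\|^2_{L^2(g_j)} + \|d^{*_{g_j}} a_j\|^2_{L^2(g_j)} + \|a_j\|^2_{L^2(g_j,\,\mathrm{supp}(d\chi))}\bigr).
\]
The first two terms vanish in the limit, and the third converges to $\|a\|^2_{L^2(g,\,\mathrm{supp}(d\chi))}$ by Rellich, since $\mathrm{supp}(d\chi)$ is compact in $X$. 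Combined with the core bound $\|(1-\chi)a_j\|^2 \to \|(1-\chi)a\|^2$ (again by Rellich) and the pointwise inequality $\|a_j\|^2 \le 2(\|\chi a_j\|^2 + \|(1-\chi)a_j\|^2)$, the normalization $\|a_j\|^2 = 1$ passes to the limit and forces $\|a\|_{L^2(g)} > 0$. In particular, $a$ is a nontrivial $L^2$ harmonic 1-form on $(X,g)$.

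To close the argument I would pass the orthogonality to the limit. The cohomology class $[a]$ lies in $H^1_{(2)}(X,g) \cong H^1(\overline{X})$ by Zucker's theorem, so by the preceding proposition there exist $g_j$-harmonic representatives $\alpha_j \in \mathcal{H}^1(\overline{X},g_j)$ converging in $C^\infty_{loc}$ to $a$ itself. Splitting $0 = \langle a_j, \alpha_j\rangle_{L^2(g_j)}$ over a compact $K \subset X$ and its complement, the core term converges to $\|a\|^2_{L^2(g,K)}$, while Cauchy–Schwarz bounds the tail by $\|\alpha_j\|_{L^2(g_j,\,\overline{X}\setminus K)}$. The main obstacle is to control this last quantity uniformly in $j$, and the plan is to apply the cusp corollary a second time to $\chi'\alpha_j$ for a cutoff $\chi'$ pushed further into the cusp: since $\alpha_j$ is $g_j$-harmonic, the terms $d(\chi'\alpha_j)$ and $d^{*_{g_j}}(\chi'\alpha_j)$ involve only $d\chi'$, so
\[
\|\chi'\alpha_j\|^2_{L^2(g_j)} \le C\,\|\alpha_j\|^2_{L^2(g_j,\,\mathrm{supp}(d\chi'))},
\]
and $C^\infty_{loc}$ convergence lets me replace the right-hand side by $\|a\|^2_{L^2(g,\,\mathrm{supp}(d\chi'))} + o(1)$, which is arbitrarily small since $a \in L^2$. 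Taking $K$ large enough to contain $\mathrm{supp}(1-\chi')$ and passing $0 = \langle a_j, \alpha_j\rangle$ to the limit gives $0 = \|a\|^2_{L^2(g)}$, contradicting $a \ne 0$. Every other step reduces to standard G\r{a}rding/Rellich bounds and limit passing.
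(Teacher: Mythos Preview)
Your proof follows the same strategy as the paper's: argue by contradiction, extract a subsequence converging locally to an $L^2$ harmonic 1-form on $(X,g)$, and combine the cusp Poincar\'e estimate for 1-forms with the convergence of $g_j$-harmonic representatives to reach a contradiction. The paper merely reverses the order---it first asserts that the limit vanishes (citing the $L^2$ cohomology computation and the convergence proposition) and then invokes the cusp estimate, exactly as in the proof of the uniform Poincar\'e inequality for functions, to contradict $\|\alpha_j\|_{L^2(g_j)}=1$---but the ingredients are identical. Your version supplies considerably more detail, in particular the second application of the cusp estimate to bound the tail $\|\alpha_j\|_{L^2(g_j,\,\overline{X}\setminus K)}$ when passing the pairing $\langle a_j,\alpha_j\rangle$ to the limit; the paper leaves this step implicit.

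One slip: you write $H^1_{(2)}(X,g)\cong H^1(\overline{X})$, but Zucker's theorem (and the paper's Proposition derived from it) gives $H^1_{(2)}(X,g)\cong H^1(X)$, and the restriction $H^1(\overline{X})\to H^1(X)$ need not be surjective (its cokernel picks up meridian classes around tori that are null-homologous in $\overline{X}$). Since the convergence proposition only produces $g_j$-harmonic representatives for classes pulled back from $H^1(\overline{X})$, applying it to $[a]$ requires knowing that $[a]$ lies in that image. The paper's one-line assertion ``by our computation of the $L^2$ cohomology of $X$ and proposition~3.3, it follows that $\alpha=0$'' is equally silent on this point, so this is a shared subtlety rather than a divergence from the paper's argument.
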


\begin{proof}
    Suppose to the contrary that there is a sequence $\alpha_j$ of 1-forms so that $\lvert \alpha_j\rvert_{L^2(g_j)} = 1$ and $\int_{\overline{X}}\lvert d\alpha_j\rvert^2 + \lvert d^{*_j}\alpha_j \rvert^2 dvol_{g_j} \rightarrow 0$.Then, we may choose a diagonal subsequence, so that in the $C^\infty_c$ topology, $\alpha_j \rightarrow \alpha$. By construction, $d\alpha = d^{*_g}\alpha = 0$, so $\alpha \in \mathcal{H}^1_g(X)$. But by our computation of the $L^2$ cohomology of $X$ and proposition 3.3, it follows that $\alpha = 0$. 

    By using corollary 3.6, it now follows from the same proof as lemma 3.2 that we have a contradiction.
\end{proof}

\section{The Seiberg-Witten Equations on Asymptotically Hyperbolic Manifolds}\label{SeibergWitten}
The nonexistence result uses uses estimates coming from Seiberg-Witten theory. Let $\overline{X}$ be a closed oriented 4-manifold with $b^+(\overline{X}) \ge 2$ and suppose that $\mathfrak{s}$ is a $spin^c$ structure. Suppose that there exists an irreducible solution $(A,\phi)$ to the Seiberg-Witten equations. The Weitzenb\"{o}ck formula tells us that
\begin{equation*}
    0 = D_A^2 \phi = \nabla_A^*\nabla_A \phi + \frac{s_g}{4}\phi + \frac{\lvert\phi\rvert^2}{4} \phi
\end{equation*}
where $s_g$ is the scalar curvature of $g$. Integration by parts combined with the observation that $\lvert F_A^+ \rvert = 8 \lvert \phi \rvert^2$ gives us the estimate 
\begin{equation*}
    \frac{1}{32\pi^2} \int_X s_g^2 dvol_g \ge (c_1^+(\mathfrak{s}))^2
\end{equation*}
first proved by LeBrun in \cite{LeBrun95}. To state the nonexistence result we first recall the notion of a monopole class \cite{Kronheimer99} \cite{LeBrunIshida}.
\begin{definition}
    Let $\overline{X}$ be a closed orientable manifold, and let $\alpha \in H^2(\overline{X})/\text{torsion}$. If there exists a $spin^c$ structure $\mathfrak{s}$ such that for any metric $g$ the Seiberg Witten equations on $\mathfrak{s}$ admits an irreducible solution, say that $\alpha$ is a {monopole class}.
\end{definition}
The goal of this section is the following nonexistence theorem for asymptotically hyperbolic manifolds.
\begin{theorem}\label{SWEstimate}
    Suppose that $\overline{X}$ is a closed oriented 4-manifold, and suppose that $\alpha$ is a monopole class. Suppose that $L \subset \overline{X}$ is a disjoint collection of smoothly embedded 2-tori, with each component having self-intersection 0. Then, if 
    \begin{equation*}
        2\chi(\overline{X}) - 3\lvert \sigma(\overline{X}) \rvert \le \frac{1}{3} \alpha^2
    \end{equation*}
    $X = \overline{X} - L$ does not admit any asymptotically hyperbolic Einstein metrics. 
\end{theorem}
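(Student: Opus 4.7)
Assume for contradiction that $X = \overline{X} - L$ admits a finite-volume asymptotically hyperbolic Einstein metric $g$. Since each component of $L$ is a 2-torus of self-intersection zero, the inclusion $X \hookrightarrow \overline{X}$ gives $\chi(X) = \chi(\overline{X})$ and $\sigma(X) = \sigma(\overline{X})$. Fix a spin$^c$ structure $\mathfrak{s}$ on $\overline{X}$ with $c_1(\mathfrak{s}) = \alpha$ modulo torsion, and take the metric compactifications $(\overline{X},g_j)$ of Section \ref{MetricApprox}. Because $\alpha$ is a monopole class, each closed manifold $(\overline{X},g_j)$ carries an irreducible Seiberg-Witten solution $(A_j,\phi_j)$ for $\mathfrak{s}$.

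The heart of the argument is extracting from the sequence $(A_j,\phi_j)$ an $L^2$ monopole on $(X,g,\mathfrak{s}|_X)$. The uniform lower bound on $s_{g_j}$ from Section \ref{MetricApprox} together with the Weitzenb\"{o}ck maximum principle yields a uniform $L^\infty$ bound $|\phi_j|^2 \le \max(-s_{g_j},0)$, and the curvature identity $F_{A_j}^+ = \sigma(\phi_j)$ then gives a uniform pointwise bound on $|F_{A_j}^+|$. Combined with the uniform volume bound $\mathrm{Vol}(\overline{X},g_j) \le C\cdot\mathrm{Vol}(X,g)$ and the topological identity $\int_{\overline{X}} F_{A_j}\wedge F_{A_j} = 4\pi^2\alpha^2$, this produces uniform $L^2$ bounds on the full curvature $F_{A_j}$. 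Choose a reference connection $A_0$ with $F_{A_0}$ supported away from $L$ so that $\|F_{A_0}\|_{g_j}$ is uniformly bounded, write $A_j = A_0 + a_j$, and gauge transform into Coulomb gauge $d^{*_{g_j}} a_j = 0$ with harmonic part lying in a fixed finite-dimensional subspace of $H^1(\overline{X})$ whose $g_j$-harmonic representatives are controlled by the convergence result of Section \ref{1forms}. The uniform Poincar\'{e} inequality for 1-forms established there converts the $L^2$ bound on $F_{A_j}$ into a uniform $L^2_1$ bound on $a_j$, and elliptic bootstrapping from the Seiberg-Witten equations upgrades this to uniform $C^k_{\mathrm{loc}}$ bounds over any compact $K\subset X$. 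A diagonal subsequence thus converges, modulo gauge, in $C^\infty_{\mathrm{loc}}$ to a smooth pair $(A,\phi)$ solving the Seiberg-Witten equations on $(X,g,\mathfrak{s}|_X)$, and the uniform $L^2$ bounds descend to the limit so that $(A,\phi)$ is a genuine $L^2$ monopole.

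Given such a monopole, the $L^2$ Chern-Weil computation of Section \ref{L2cohomology} identifies the self-pairing of $c_1(\mathfrak{s})$ computed from $[F_A]_{L^2}$ with the topological number $\alpha^2$, so Theorem \ref{DiCerboEstimate} yields $\frac{1}{32\pi^2}\int_X s_g^2\, dvol_g \ge \alpha^2$. Combining with Theorem \ref{DaiWei} gives $2\chi(\overline{X}) - 3|\sigma(\overline{X})| \ge \tfrac{1}{3}\alpha^2$. Under the hypothesis this forces equality throughout the chain, whereupon the rigidity clause of Theorem \ref{DiCerboEstimate} forces $g$ to be K\"{a}hler with constant scalar curvature (and hence K\"{a}hler-Einstein, since $g$ is already Einstein). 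But an asymptotically hyperbolic metric with a flat $T^3$ cross-section at infinity cannot be K\"{a}hler-Einstein of negative scalar curvature: such cusps are modelled on the Heisenberg nilmanifold rather than a flat 3-torus. This excludes equality and closes the contradiction.

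The principal obstacle is the compactness step, since one must prevent curvature and spinor $L^2$-energy from escaping into the pinched caps $\overline{X}\setminus X$ as $j\to\infty$; if energy were lost in the caps, the $L^2$ self-intersection $[F_A]_{L^2}^2$ on $X$ would drop below $\alpha^2$ on $\overline{X}$ and the Di Cerbo estimate would give nothing. This is precisely why the model metrics $\tilde g_j$ in Section \ref{MetricApprox} are arranged to have uniformly bounded scalar curvature below and mean curvature bounded below by $3/2$, conditions tuned so that the Poincar\'{e} estimates of Section \ref{1forms} remain uniform on the cusp region and provide the gauge-theoretic control that pins the monopoles in the interior of $X$.
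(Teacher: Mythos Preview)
Your argument is correct and follows the paper's approach: use the monopole-class hypothesis to obtain irreducible solutions on each $(\overline X,g_j)$, invoke the uniform estimates of Sections~\ref{1forms}--\ref{SeibergWitten} (your second paragraph is precisely Propositions~\ref{SWAprioriEstimates} and~\ref{SWExistence}) to extract an $L^2$ monopole on $(X,g)$, and conclude by combining Theorem~\ref{DiCerboEstimate} with the scalar-curvature estimate~\eqref{DaiWeiEstimate} (note: you cite Theorem~\ref{DaiWei}, but that is only the bare Hitchin--Thorpe conclusion; the inequality you actually use is~\eqref{DaiWeiEstimate}).

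The only point of departure is the borderline case. The paper disposes of equality by noting it would force $g$ to be Ricci-flat (Calabi--Yau), which is immediately incompatible with asymptotic hyperbolicity since $s_g\to -12$ on the ends. You instead use the rigidity clause of Theorem~\ref{DiCerboEstimate} to get K\"ahler--Einstein and then appeal to the fact that negative K\"ahler--Einstein cusps are of Heisenberg type rather than flat $T^3$. This is valid but heavier than needed; a cleaner form of your own observation is that any K\"ahler $4$-metric satisfies $\lvert W^+\rvert^2=s^2/24$ pointwise, while asymptotic real-hyperbolicity forces $W^+\to 0$, hence $s\to 0$, contradicting $s\to -12$.
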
 

We first derive apriori estimates that we need for the Seiberg-Witten bootstrap. The strategy is entirely analogous to the standard bootstrap in \cite{MorganBook}. Let $\alpha$ be a monopole class, and let $\mathfrak{s}$ be the corresponding $spin^c$ structure. Note that by the adjunction inequality \cite{Kronheimer99}, $\langle c_1(\mathfrak{s}),[T_i]\rangle = 0$ for each embedded torus $T_i$. Fix a base connection $A_0$ so that $F_{A_0} = 0$ on a tubular neighborhood of each of the 2-tori $\nu(T_i)$.  
\begin{proposition}\label{SWAprioriEstimates}
    Suppose that we have a sequence of solutions $(A_j,\phi_j)$ of the unperturbed Seiberg Witten equations on $(\overline{X},g_j)$
    \begin{equation*}
        \begin{cases}
            D_{A_j} \phi_j = 0\\
            F_{A_j}^+ = 2 (\phi_j \phi_j^*)_0
        \end{cases}
    \end{equation*}
    Let $a_j = A_0 - A_j$. By gauge fixing, we may assume that $d^{*_{g_j}}a_j = 0$ and 
    \begin{equation*}
        \int_{\overline{X}} a_j \wedge \beta_i \in [0,2\pi]
    \end{equation*}
    where $\beta_i \in H^3_{dR}(\overline{X})$ forms a fixed orthonormal basis. Then there exist uniform constants $K_1,K_2,K_3$, independent of $j$ so that
    \begin{equation*}
        \| a_j \|_{L^2_2(g_j)} \le K_1, \|\psi_j\|_{L^2_1(g_j)} \le K_2, \|\psi_j \|_{L^\infty} \le K_3
    \end{equation*}
\end{proposition}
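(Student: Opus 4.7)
The plan is to run the standard Seiberg--Witten bootstrap as in Morgan's book, obtaining uniform estimates over the degenerating family $\{g_j\}$ by leaning on the lower scalar curvature bound $s_{g_j} \ge C$ and the uniform volume bound from Section \ref{MetricApprox}, together with the uniform Poincar\'{e} inequalities for functions and 1-forms proved in Section \ref{1forms}. I would derive the three estimates in order: a pointwise $L^\infty$ bound on $\phi_j$, an $L^2_1$ bound on $\phi_j$, and an $L^2$ bound on $a_j$ that is then bootstrapped to $L^2_2$.

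For the spinor, the Weitzenb\"{o}ck identity
\begin{equation*}
0 = \nabla_{A_j}^*\nabla_{A_j}\phi_j + \tfrac{s_{g_j}}{4}\phi_j + \tfrac{|\phi_j|^2}{4}\phi_j
\end{equation*}
yields $\Delta_{g_j}|\phi_j|^2 = -\tfrac{s_{g_j}}{2}|\phi_j|^2 - \tfrac{|\phi_j|^4}{2} - 2|\nabla_{A_j}\phi_j|^2$, so applying the maximum principle on the closed manifold $\overline{X}$ gives $|\phi_j|^2 \le -s_{g_j}$ at any maximum point, and hence $\|\phi_j\|_{L^\infty} \le K_3$ uniformly in $j$. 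Pairing the Weitzenb\"{o}ck identity with $\phi_j$ and integrating over $\overline{X}$, then using the $L^\infty$ bound and the uniform volume bound, produces the uniform estimate $\|\nabla_{A_j}\phi_j\|_{L^2(g_j)} \le C$, giving $\|\phi_j\|_{L^2_1(g_j)} \le K_2$.

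For $a_j$, the curvature equation $F_{A_j}^+ = 2(\phi_j\phi_j^*)_0$ combined with $F_{A_j} = F_{A_0} + da_j$ produces a uniform $L^\infty$ bound on $(da_j)^+$, using the spinor bound together with the uniform $L^\infty$ control on $F_{A_0}^+$ from the choice of base connection vanishing near each 2-torus. Since $\overline{X}$ is closed we have $\int_{\overline{X}} da_j \wedge da_j = 0$, which forces $\|(da_j)^-\|_{L^2(g_j)} = \|(da_j)^+\|_{L^2(g_j)}$, so $\|da_j\|_{L^2(g_j)}$ is uniformly bounded. Now decompose $a_j = a_j^h + a_j^\perp$ into the $g_j$-harmonic projection and its orthogonal complement. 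The period normalization together with the harmonic convergence proposition of Section \ref{1forms} gives a uniform $L^2(g_j)$ bound on $a_j^h$ (writing $a_j^h = \beta + df_j$ for $\beta$ a fixed smooth representative and $f_j$ uniformly bounded in $L^2_1$), while the Coulomb gauge condition $d^{*_{g_j}}a_j = 0$ and the uniform 1-form Poincar\'{e} inequality of Section \ref{1forms} applied to $a_j^\perp$ give $\|a_j^\perp\|_{L^2(g_j)} \le c^{-1}\|da_j\|_{L^2(g_j)}$, completing the uniform $L^2$ bound on $a_j$.

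The bootstrap to $L^2_2$ then proceeds from these bounds: since $\phi_j$ lies uniformly in $L^\infty \cap L^2_1$, the quadratic term $2(\phi_j\phi_j^*)_0$ is uniformly controlled in $L^2_1(g_j)$, so the right-hand side of the curvature equation sits in $L^2_1$; combining this with $d^{*_{g_j}}a_j = 0$ and the already-established $L^2$ bound, elliptic regularity for the operator $d + d^{*_{g_j}}$ upgrades $a_j$ first to $L^2_1$ and then to $L^2_2$. The main obstacle is ensuring uniformity of the elliptic constants across the degenerating family: on the fixed compact core the metrics coincide with $g$ and standard local estimates suffice, but on the filling caps $[j, T_j] \times T^3$ the metric degenerates as the circle fiber collapses, so global elliptic estimates must be replaced by a covering argument exploiting the explicit warped product geometry and curvature control from Section \ref{MetricApprox} together with the uniform Poincar\'{e} inequalities from Section \ref{1forms}.
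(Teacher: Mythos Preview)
Your proposal is correct and follows essentially the same route as the paper: Weitzenb\"ock plus maximum principle for the $L^\infty$ spinor bound, integration of Weitzenb\"ock for $L^2_1$, the harmonic/orthogonal decomposition of $a_j$ controlled via the period normalization and the uniform 1-form Poincar\'e inequality of Section~\ref{1forms}, and then a bootstrap using the curvature equation.

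One small difference worth noting: for the final step to $L^2_2$, the paper does not invoke abstract elliptic regularity for $d+d^{*_{g_j}}$ but instead differentiates the curvature equation directly,
\[
\nabla F_{A_j}^+ = (\nabla\phi_j)\otimes\phi_j^* + \phi_j\otimes(\nabla\phi_j^*) - \operatorname{Re}\langle\nabla\phi_j,\phi_j\rangle\,\mathrm{Id},
\]
so that the $L^\infty$ and $L^2_1$ bounds on $\phi_j$ immediately give an $L^2$ bound on $\nabla F_{A_j}^+$, hence on $\nabla d^{+_j}b_j$. This makes the passage from $L^2_1$ to $L^2_2$ on $b_j$ more concrete than your appeal to elliptic regularity, and partially addresses your worry about uniform elliptic constants on the collapsing caps. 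That said, the paper is also not fully explicit about why this suffices globally in $L^2_2(g_j)$; in practice only bounds on compact subsets where $g_j = g$ are used in the subsequent convergence argument, so the concern you flag is real but ultimately harmless for the application.
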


\begin{proof}
These bounds follow from the usual approach to apriori estimates for the Seiberg-Witten equations. We first derive the $L^\infty$ bound on $\phi_j$. Suppose that $x \in \overline{X}$ is a maximal point for $\lvert \phi_j \rvert^2$. Then $Re \langle \nabla_{A_j}^* \nabla_{A_j} \phi_j,\phi_j \rangle \ge 0$ at $x$. By the Weiztenb\"ock formula, 
\begin{equation*}
    0 = Re \langle \nabla_{A_j}^* \nabla_{A_j} \phi_j,\phi_j \rangle + \frac{1}{4}\lvert\phi_j\rvert^4 + \frac{1}{4} s_{g_j} \lvert \phi_j\rvert^2
\end{equation*}
we see that at the point $x$,
\begin{equation*}
    \lvert \phi_j \rvert^2 \le -s_{g_j}
\end{equation*}
By construction, $s_{g_j}$ is uniformly bounded. So, it follows that $\lvert \phi_j\rvert$ is uniformly bounded, independent of $j$. Since  $vol(\overline{X},g_j)$ is uniformly bounded in $j$ as well, it also follows that $\lvert \phi_j \rvert_{L^2}$ is uniformly bounded. By integrating the Weitzenb\"ock formula for the Seiberg Witten equations, we have that
\begin{equation*}
    0 = \int_{\overline{X}} \lvert \nabla_{A_j} \phi_j \rvert^2 + \frac{1}{4}s_{g_j}\lvert \phi_j \rvert^2 + \frac{1}{4}\lvert\phi_j\rvert^4
\end{equation*}
Since the terms $s_{g_j}\lvert \phi_j \rvert^2$ and $\lvert\phi_j\rvert^4$ are both bounded, it follows that $\lvert \nabla_{A_j} \phi_j \rvert^2 $ is also uniformly bounded independent of $j$. Therefore, we have a uniform bound on $\lvert \phi_j \rvert_{L^2_1}$.  

We now proceed to show the $L^2_2$ bounds on $a_j$. Write $a_j = b_j + c_j$, where $b_j \in (\mathcal{H}^1_{g_j})^\perp$ and $c_j \in \mathcal{H}^1_{g_j}$. Proposition 3.3 gives us a uniform $L^2_2$ bound on $c_j$ since $c_j$ is contained inside a compact set in $H^1(\overline{X})$. Since $\int_{\overline{X}}\lvert \nabla_{A_j} \phi_j \rvert^2  \ge 0$, by the Weitzenb\"ock formula, we have the inequality
\begin{equation*}
    \int_{\overline{X}} \lvert F_{A_j}^+ \rvert^2 = \frac{1}{8}\int_{\overline{X}} \lvert \phi_j \rvert^4 \le \frac{1}{8}\int_{\overline{X}} s_{g_j}^2 
\end{equation*}
The first equality is from the curvature equation in the Seiberg Witten equations. This gives us a bound for $\lvert F_{A_j}^+ \rvert_{L^2(g_j}$. Since $d^{+_j} b_j = \frac{1}{2}(F_{A_j}^+ - F_{A_0}^+)$ and since $\lvert F_{A_0}^+ \rvert_{L^2(g_j)}$ is uniformly bounded, it follows that we have a bound for $\lvert d^{+_j} b_j  \rvert_{L^2(g_j)}$. Note that $\| db_j \|_{L^2} = 2 \|d^{+_{j}}b_j\|_{L^2}$ and so it follows from corollary 3.7 that we have $L^2_1$ bounds on $b_j$. 

We note from taking the covariant derivative on the curvature equation that
\begin{equation*}
    \nabla F_{A_j}^+ = (\nabla \phi_j) \otimes \phi_j^* + \phi_j \otimes (\nabla \phi_j^*) - Re \langle \nabla \phi_j, \phi_j \rangle Id
\end{equation*}
Note that the $L^\infty$ bound and the $L^2_1$ bound on $\phi_j$ combine to give us an $L^2_1$ bound on $F_{A_j}^+$. We use the curvature Seiberg-Witten equation $d^+ b_j  = F_{A_j}^+ - F_{A_0}^+$ to see that this then gives us an $L^2_2$ bound on $b_j$. Combining the $L^2_2$ uniform bounds for $b_j$ and $c_j$, we get a uniform bound for $a_j$, as desired.
\end{proof}

By using these estimates, we are ready to use the Seiberg-Witten bootstrap to construct a solution to the Seiberg-Witten equations on $(X,g)$. 
\begin{proposition}\label{SWExistence}
    Suppose that $(A_j,\phi_j)$ are solutions to the Seiberg-Witten equations on $(\overline{X},g_j)$ 
    \begin{equation*}
        \begin{cases}
            D_{A_j} \phi_j = 0\\
            F_{A_j}^+ = 2 (\phi_j \phi_j^*)_0
        \end{cases}
    \end{equation*}
    Then there exists a subsequence that converges, up to gauge transformation, in the $C^\infty_c$ topology to a solution $(A,\phi)$ of the Seiberg Witten equations, with $A - A_0 \in L^2_1\Omega(X,g)$. In particular, it follows that 
    \begin{equation*}
        \int_X F_A \wedge F_A = \int_X F_{A_0} \wedge F_{A_0} = c_1^2(\mathfrak{s})
    \end{equation*}
\end{proposition}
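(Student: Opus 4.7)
The plan is to combine the uniform a priori bounds from Proposition \ref{SWAprioriEstimates} with a compact-exhaustion diagonal argument, exploiting the crucial feature that the metrics $g_j$ agree with $g$ on $\{t \le j\}$, so that any fixed compact $K \subset X$ eventually lies in the region where $g_j = g$. This makes the "convergence of metrics" issue essentially trivial on compacta and reduces everything to the standard Seiberg-Witten bootstrap on a fixed Riemannian manifold.

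First, I would fix a compact exhaustion $K_1 \subset K_2 \subset \cdots \subset X$ with $\bigcup K_n = X$, chosen so that for every $n$ there exists $j_n$ with $g_j|_{K_n} = g|_{K_n}$ for all $j \ge j_n$. On each $K_n$, Proposition \ref{SWAprioriEstimates} gives $\|a_j\|_{L^2_2(K_n,g)} \le K_1$, $\|\phi_j\|_{L^2_1(K_n,g)} \le K_2$, and $\|\phi_j\|_{L^\infty(K_n)} \le K_3$ for $j \ge j_n$. By Rellich compactness and the Banach-Alaoglu theorem, pass to a subsequence weakly convergent in $L^2_2 \times L^2_1$ and strongly in $L^2_1 \times L^2$ on $K_n$, with limit $(a^{(n)}, \phi^{(n)})$. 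Since $d^{*_g} a_j = 0$ and the gauge-fixed Seiberg-Witten system
\begin{equation*}
\begin{cases}
D_{A_0+a}\phi = 0\\
d^+a = (\phi\phi^*)_0 - \tfrac{1}{2}F_{A_0}^+\\
d^* a = 0
\end{cases}
\end{equation*}
is elliptic in $(a,\phi)$, the standard Seiberg-Witten bootstrap (as in Morgan's book) upgrades weak $L^2_2$ convergence to $C^\infty$ convergence on any slightly smaller compact set: the quadratic right-hand sides are controlled by the $L^\infty$ bound on $\phi_j$ plus the $L^2_1 \hookrightarrow L^4$ embedding, feeding through elliptic regularity to produce uniform $L^2_k$ bounds for every $k$ on interior compacta.

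Next, I would run a diagonal extraction, producing a single subsequence (still denoted $j$) that converges in $C^\infty_c(X)$ to a pair $(a,\phi)$ with $A = A_0 + a$ satisfying the Seiberg-Witten equations on $(X,g)$ and $d^{*_g} a = 0$. For the global $L^2_1$ assertion, apply lower semicontinuity of the $L^2_1$ norm on each $K_n$: since $g_j|_{K_n} = g|_{K_n}$ for $j$ large,
\begin{equation*}
\|a\|_{L^2_1(K_n,g)} \le \liminf_{j\to\infty} \|a_j\|_{L^2_1(K_n,g_j)} \le K_1.
\end{equation*}
Letting $n \to \infty$ by monotone convergence gives $\|a\|_{L^2_1(X,g)} \le K_1$, so $a \in L^2_1 \Omega^1(X,g)$ as claimed.

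Finally, the Chern-class identity is exactly the $L^2$ Chern-Weil computation already carried out in Section \ref{L2cohomology}: because $A_0$ was chosen with $F_{A_0}$ vanishing on a tubular neighborhood of $L$ (hence $F_{A_0}\in L^2$) and $a \in L^2_1$, the cross terms $\int_X da \wedge (2F_{A_0}+da)$ vanish by the compactly supported approximation argument already given, yielding $\int_X F_A \wedge F_A = \int_X F_{A_0} \wedge F_{A_0} = c_1^2(\mathfrak{s})$. The main obstacle I expect is verifying that the weak limit actually solves the equations in the strong sense on all of $X$ rather than just on each $K_n$ separately, which is handled by the fact that the $C^\infty_c$ convergence from the bootstrap is compatible with the diagonal choice so that the equations pass to the limit pointwise; and that gauge fixing can be arranged uniformly, which is ensured by Proposition \ref{SWAprioriEstimates} allowing the Coulomb condition on $(\overline{X},g_j)$ and the fact that $g_j = g$ on $K_n$ for large $j$ means the resulting gauges already satisfy $d^{*_g} a_j = 0$ there.
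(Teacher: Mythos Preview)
Your proposal is correct and follows essentially the same route as the paper: invoke the uniform bounds of Proposition~\ref{SWAprioriEstimates}, run the standard Seiberg--Witten bootstrap on compacta where $g_j = g$, and extract a diagonal subsequence converging in $C^\infty_c$. If anything you are more explicit than the paper about two points it leaves implicit---namely, the lower-semicontinuity argument showing $a \in L^2_1\Omega^1(X,g)$ globally, and the appeal to the $L^2$ Chern--Weil computation of Section~\ref{L2cohomology} for the final integral identity.
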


\begin{proof}
Suppose that $(A_n,\phi_n)$ are solutions to the Seiberg-Witten equations on $(\overline{X},g_n)$ and let $a_n = A_0 - A_n$. We proceed via the Seiberg Witten bootstrap argument on the compact sets $X_n \subset X$ (where $g_n = g$), and finish by extracting a diagonal subsequence of the solutions. To run the bootstrap, we need:
\begin{enumerate}
    \item A uniform $L^\infty$ bound and a uniform $L^2_2$ bound on $\lvert \phi_n \rvert$
    \item A uniform $L^2_1$ bound $\|a_n\|_{L^2_1(g_n)}$ 
\end{enumerate}
These uniform bounds come from proposition \ref{SWExistence}. To start the bootstrap, by the first equation we note that
\begin{equation*}
    D_{A_n} \phi_n = -a_n \cdot \phi_n
\end{equation*}
By the uniform $L^2_2$ bound on $a_n$ and the uniform $L^\infty$ bound on $\phi_n$, it follows that we have an $L^4_0$ bound on $ D_{A_n} \phi_n$ by Sobolev multiplication. By elliptic regularity, this implies an $L^4_1$ bound on $\phi_n$. Repeating the previous step with the Sobolev multiplication $L^2_2 \otimes L^4_1 \rightarrow L^3_1$, it follows that we get uniform $L^3_2$ bounds on $\phi_n$. And repeating again with $L^2_2 \otimes L^3_2 \rightarrow L^2_2$, we get a uniform $L^2_3$ bound on $\phi_n$. 

The uniform part of the bootstrapping now follows from $L^2_k \otimes L^2_k \rightarrow L^2_k$ for $k \ge 3$. We use the curvature Seiberg-Witten equation
\begin{equation*}
    d^+ a_n  = \frac{1}{2}(\phi_n\phi_n^*)_0 - F_{A_0}^+
\end{equation*}
and the $L^2_k$ bounds on $\phi_n$ to acquire uniform $L^2_{k+1}$ bounds on $a_n$ (here we use the assumption of Coulomb gauge). And we use the Dirac equation to acquire $L^2_{k+1}$ bounds for $\phi_n$. The Sobolev embedding theorem then tells us that a diagonal subsequence converges in $C^\infty_c$. 
\end{proof}

The proof of theorem \ref{SWEstimate} now quickly follows.
\begin{proof}
    Let $g$ be any asymptotically hyperbolic metric on $X$. Let $\mathfrak{s}$ be a $spin^c$ structure with $c_1(\mathfrak{s}) = \alpha$. From proposition \ref{SWExistence}, it follows that there exists a solution $(A,\phi)$ to the Seiberg-Witten equations on $X$. Thus, it follows from theorem \ref{DiCerboEstimate} that
    \begin{equation*}
        \frac{1}{32\pi^2} \int_X s_g^2 dvol_g \ge \alpha^2.
    \end{equation*}
    If $g$ were to be Einstein, then by \ref{DaiWeiEstimate},
    \begin{equation*}
        2\chi(X) - 3\lvert \sigma \rvert \ge \frac{1}{96\pi^2 }\int_X s_g^2 dvol_g 
    \end{equation*}
    with equality if and only if $g$ is Calabi-Yau with respect to a complex structure $J$. Since asymptotically hyperbolic metrics cannot be Ricci flat, it follows that when  $ 2\chi(X) - 3\lvert \sigma \rvert \le \frac{1}{3} \alpha^2$, $g$ cannot possibly be Einstein. 
\end{proof}

\section{$Pin^-(2)$ Monopoles}\label{Pin2}
The $Pin^-(2)$ equations are a variation of the Seiberg-Witten equations defined for a double cover $\tilde{X} \rightarrow X$ by Nakamura in \cite{Nakamura13}. Let $\ell = \Tilde{X} \times_{\pm 1} \mathbb{Z}$ be the local system corresponding to the double cover. Note that local systems on $X$ with coefficient group $\mathbb{Z}$ are in bijective correspondence with double covers of $X$, so we sill sometimes use the two notions interchangeably. The main goal of this section is to prove an analogous version of the nonexistence theorem from the previous section.
\begin{theorem}\label{Pin2Estimate}
    Let $X$ be a closed oriented manifold and let $\tilde{X} \rightarrow X$ be a double cover with $b^+(X,\ell) \ge 2$. Suppose that $\omega \in H^2(\overline{X},\ell)$ is a $Pin^-(2)$ monopole basic class, and let $L \subset \overline{X}$ be a disjoint collection of smoothly embedded 2-tori, with each component having self-intersection 0. Then if 
    \begin{equation*}
        2\chi(\overline{X}) - 3\lvert \sigma(\overline{X}) \rvert \le \frac{1}{3} \omega^2
    \end{equation*}
    the noncompact manifold $X = \overline{X} - L$ does not admit any asymptotically hyperbolic Einstein metrics. 
\end{theorem}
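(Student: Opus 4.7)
The strategy mirrors the proof of Theorem \ref{SWEstimate} almost verbatim, with the Seiberg-Witten equations replaced throughout by the $Pin^-(2)$ monopole equations with local coefficient system $\ell$. The approximating metrics $g_j$ on $\overline{X}$ constructed in Section \ref{MetricApprox} are used unchanged, together with all curvature inequalities \ref{ApproxEstimates} and the uniform lower bound on mean curvature \ref{MeanCurvatureComputation}. Since $\omega$ is a $Pin^-(2)$ monopole basic class and $b^+(\overline{X}, \ell) \ge 2$, for every $j$ we obtain an irreducible solution $(A_j, \phi_j)$ on the closed manifold $(\overline{X}, g_j)$ with twisted first Chern class $\omega$. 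The goal is to extract, after gauge fixing, a limiting $Pin^-(2)$ monopole on $(X, g)$ whose $L^2$ Chern-Weil self-pairing equals $\omega^2$, and then to combine the $Pin^-(2)$ analog of Theorem \ref{DiCerboEstimate} with the Dai-Wei estimate \eqref{DaiWeiEstimate} to derive a contradiction when $g$ is assumed asymptotically hyperbolic Einstein.

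The first step is the analog of Proposition \ref{SWAprioriEstimates}. The Weitzenb\"ock formula for the $Pin^-(2)$ twisted Dirac operator takes the same form
\begin{equation*}
0 = \nabla_{A_j}^* \nabla_{A_j} \phi_j + \frac{s_{g_j}}{4} \phi_j + \frac{\lvert \phi_j \rvert^2}{4} \phi_j,
\end{equation*}
because the local coefficient system $\ell$ is flat and contributes no curvature term. Since the scalar curvature of $(\overline{X}, g_j)$ is uniformly bounded below in $j$, the maximum principle applied at a point of maximal $\lvert \phi_j \rvert$ gives a uniform $L^\infty$ bound, and integrating the Weitzenb\"ock identity produces the uniform $L^2_1$ bound together with the $L^2$ bound on $F_{A_j}^+$ via the curvature equation $F_{A_j}^+ = \sigma(\phi_j)$. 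For the gauge fixing, we fix a base $Pin^-(2)$ connection $A_0$ with $F_{A_0}$ vanishing near each embedded torus $T_i$; this is possible because the $Pin^-(2)$ adjunction inequality forces $\langle \omega, [T_i] \rangle = 0$. Placing $a_j = A_0 - A_j$ in twisted Coulomb gauge and decomposing against the finite-dimensional twisted harmonic space $\mathcal{H}^1(\overline{X}, g_j; \ell)$, the uniform Poincar\'e estimates of Section \ref{1forms} extend unchanged to the $\ell$-twisted setting because the Bochner argument on the cusps depends only on the pointwise Ricci curvature bound \ref{RicciComputation} and the second fundamental form \ref{SecondFundFormComputation}, both of which are insensitive to twisting by a flat real line bundle.

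With the uniform $L^\infty$, $L^2_1$, and gauge-fixed $L^2_2$ bounds in place, the Seiberg-Witten bootstrap of Proposition \ref{SWExistence} runs verbatim, since elliptic regularity and Sobolev multiplication for the twisted Dirac operator and twisted $d^+$ are purely local statements. After passing to gauge transformations and a diagonal subsequence, we obtain a $C^\infty_c$ limit $(A, \phi)$ solving the $Pin^-(2)$ monopole equations on $(X, g)$ with $A - A_0 \in L^2_1\Omega^1(X, g; \ell)$. The $L^2$ Chern-Weil argument of Section \ref{L2cohomology}, applied to the $\ell$-twisted curvature, then gives
\begin{equation*}
\int_X F_A \wedge F_A = \int_X F_{A_0} \wedge F_{A_0} = \omega^2.
\end{equation*}

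The final step integrates the Weitzenb\"ock identity for the limiting solution on $(X, g)$, exactly as in the proof of Theorem \ref{DiCerboEstimate}, to obtain the $L^2$ scalar curvature bound $\tfrac{1}{32\pi^2}\int_X s_g^2 \, dvol_g \ge \omega^2$. Combining this with the Dai-Wei estimate \eqref{DaiWeiEstimate} yields $2\chi(\overline{X}) - 3\lvert \sigma(\overline{X}) \rvert \ge \tfrac{1}{3}\omega^2$, with equality forcing $g$ to be Ricci-flat and K\"ahler, which is incompatible with asymptotic hyperbolicity. The main technical obstacle I expect lies in carefully setting up the twisted $L^2$ machinery: verifying that Zucker's computation, the Hodge-Kodaira decomposition, the twisted Poincar\'e duality, and the uniform Poincar\'e inequality for $\ell$-valued 1-forms all pass cleanly through with local coefficients. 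Once this foundation is in place, the Bochner-based estimates and the bootstrap reduce to the arguments already carried out in Sections \ref{1forms} and \ref{SeibergWitten}.
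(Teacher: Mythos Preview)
Your argument is correct in outline but takes a genuinely different route from the paper. Rather than redevelop the entire analytic package (twisted $L^2$ cohomology, twisted Hodge--Kodaira, twisted Poincar\'e inequalities for $\ell$-valued $1$-forms, twisted bootstrap) on $\overline{X}$, the paper passes to the double cover: the $Pin^-(2)$ solutions $(A_j,\phi_j)$ on $(\overline{X},g_j)$ lift via Proposition~\ref{Pin2SWExistence} to \emph{ordinary} Seiberg--Witten solutions $(\tilde A_j,\tilde\phi_j)$ on $(\tilde X,\pi^*g_j)$, and since the pulled-back metrics $\pi^*g_j$ are again approximating metrics for the asymptotically hyperbolic metric $\pi^*g$ on $\tilde X-\tilde L$, Proposition~\ref{SWExistence} applies verbatim upstairs. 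The scalar curvature estimate on $\tilde X-\tilde L$ then descends to $X$ by multiplicativity of $\int s^2$ and of $c_1^2$ under finite covers. This sidesteps precisely the ``main technical obstacle'' you flag---none of Zucker's computation, the Hodge--Kodaira decomposition, or the Section~\ref{1forms} estimates needs to be redone with local coefficients. Your direct approach would also go through, and has the virtue of staying on $X$, but at the cost of rechecking every piece of the $L^2$ machinery in the twisted setting; the paper's detour through $\tilde X$ buys a one-line reduction to the untwisted case already established in Section~\ref{SeibergWitten}.
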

We prove theorem \ref{MainTheorem} by combining this with a gluing theorem proved by Nakamura (theorem \ref{Pin2Gluing}) for $Pin^-(2)$ monopole invariants in \cite{Nakamura15}.

\subsection{The $Pin^-(2)$ Monopole Equations}
We briefly recall the setup of $Pin^-(2)$ monopoles. For a more thorough exposition, see section 3 of \cite{Nakamura13}. Define the Lie group $Pin^-(2) = U(1) \sqcup jU(1) \subset Sp(1)$, with a double covering map $\phi: Pin^-(2) \rightarrow O(2)$ that maps 
\begin{equation*}
    \phi(z) = z^2, \phi(j) = \begin{pmatrix} 1 & 0 \\ 0 & -1\end{pmatrix}.
\end{equation*}
The Lie group $Spin^{c-}(4)$ is defined by $Spin^{c-}(4) = Spin(4) \times_{\pm 1} Pin^-(2)$, so that there is an exact sequence of groups
\begin{equation*}
    1 \rightarrow \{\pm1\} \rightarrow Spin^{c-}(4) \rightarrow SO(4) \times O(2) \rightarrow 1
\end{equation*}
The double covering $Pin^-(2) \rightarrow U(1)$ also induces a double covering $Spin^{c-}(4) \rightarrow Spin^c(4)$. The spinor representations $\Delta^{\pm}: Spin(4) \rightarrow M_{4}(\mathbb{C})$ extend to representations of $Spin^{c-}(4)$ where $U(1)$ acts by complex multiplication and $j$ acts by conjugation. 

Let $X$ be a closed oriented 4-manifold and let $\pi: \Tilde{X} \rightarrow X$ be an unbranched double cover, and let $g$ be any Riemannian metric on $X$. A $Spin^{c-}$ structure $\mathfrak{s} = (P,s,t)$ on $X$, where $P$ is a principal $Spin^{c-}$ bundle over $X$, $s: \tilde{X} \rightarrow P/Spin^c(4)$ is an isomorphism of double covers of $X$ and $t: Fr(X) \rightarrow P/Pin^-(2)$ is an isomorphism of principal $SO(4)$ bundles. The $O(2)$ bundle $E = P/Spin(4)$ is the characteristic bundle of the $spin^{c-}$ structure. By fixing an $\ell$-orientation on $E$, we can define a corresponding $\ell$-coefficient Euler class of $E$
\begin{equation*}
    c_1(\mathfrak{s}) = c_1(E) \in H^2(X,\ell).
\end{equation*}
Define the spinor bundles 
\begin{equation*}
    S^{\pm}= P \times_{Spin^{c-}} \Delta^{\pm}
\end{equation*}
to be the rank 2 complex vector bundles associated to the spinor representations. We can now write down the $Pin^-(2)$ monopole equations. Suppose that a $spin^{c-}$ structure $\mathfrak{s}$ is given. Let $E$ be the corresponding principal $O(2)$ bundle, and let $\lambda = \Tilde{X} \times_{\pm1} \mathbb{R}$, and $\ell = \Tilde{X} \times_{\pm1} \mathbb{Z}$ be the corresponding $\mathbb{R}$ and $\mathbb{Z}$ coefficient systems associated to the double covering. An $O(2)$ connection $A$ on $E$ induces a Dirac operator $D_A: \Gamma(S^+) \rightarrow \Gamma(S^+)$ and there is a standard quadratic map $q: S^+ \rightarrow \Omega^+(X,i\lambda)$. Note that $F_A \in \Omega^2(X,i\lambda)$. The $Pin^-(2)$-monopole equations are
\begin{equation*}
    \begin{cases}
        D_A \phi& = 0\\
        F_A^+ &= \frac{1}{2} \rho(\phi) + i\mu
    \end{cases}
\end{equation*}
here $\mu \in \Omega^+(X,i\lambda)$ is a perturbation; when $\mu = 0$, the equations are called the unperturbed $Pin^-(2)$-monopole equations. Let $\mathcal{A}$ be the space of $O(2)$ connections on $E$ and define the configuration space $\mathcal{C} = \mathcal{A} \times \Gamma(S^+)$. We also define the space of irreducible configurations $\mathcal{C}^* = \mathcal{A} \times (\Gamma(S^+)\setminus \{0\})$. The gauge group $\mathcal{G} = \Gamma(\Tilde{X} \times_{\pm1} U(1))$ acts on $\mathcal{C}$ and preserves solutions to the $Pin^-(2)$-monopole equations. Let$\mathcal{B}^* = \mathcal{C}^* / \mathcal{G}$ be the moduli space of configurations and let $\mathcal{M}_{Pin^-(2)}(X,\mathfrak{s})\subset \mathcal{B}^*$ denote the moduli space of irreducible solutions. 

For a given $k \ge 3$, we work with the $L^2_{k}$ completion of $\mathcal{C}$, and the $L^2_{k+1}$ completion of the gauge group. Fix a reference $O(2)$ connection $A_0$. As with Seiberg-Witten theory when $b_+(X,\ell) \ge 2$, for a generic choice of perturbation the $Pin^-(2)$ moduli space will be a smooth compact $d$-dimensional submanifold of $\mathcal{B}^*$, wehre the dimension $d$ is given by
\begin{equation*}
    d= \frac{1}{4}(c_1^2(E) - \sigma(X)) - (b_0(X,\ell) - b_1(X,\ell) + b_+(X,\ell)).
\end{equation*}
Note that in contrast to the ordinary Seiberg-Witten equations, the moduli space is not always orientable. The $Pin^-(2)$ invariant of $(X,\mathfrak{s})$ is defined is defined as the fundamental class
\begin{equation*}
    [\mathcal{M}_{Pin^-(2)}(X,\mathfrak{s})] \in H_d(\mathcal{B}^*).
\end{equation*}
In the case where $d = 0$, this amounts to a mod 2 count of solutions up to guage of the $Pin^-(2)$ equations under a generic pertubation. In this case, let $SW_{Pin^-(2)} \in \mathbb{Z}/2\mathbb{Z}$ denote the mod 2 count.

Nakamura proved a gluing formula for $Pin^-(2)$ monopoles \cite{Nakamura15}. It can be viewed as an analog of the blow-up formula from Seiberg-Witten theory in the $Pin^-(2)$ setting. For any 3-manifold $Y$, let $\ell$ be the $\mathbb{Z}$ local system on $S^1 \times Y $ coming from the nontrivial local system on $S^1$. For $g \ge 1$, construct a $\mathbb{Z}$ local system $\ell$ on $S^2 \times \Sigma_g$ as follows. Let $\ell_{T^2}$ be any nontrivial $\mathbb{Z}$ local system on the 2-torus, and recall that $\Sigma_g = \#^g T^2$. Define $\ell_{\Sigma_g} = \#^g \ell_{T^2}$, and define $\ell$ on $S^2 \times \Sigma_g$ by taking the pullback of $\ell_{\Sigma_g}$ through the pullback map. In \cite{Nakamura15}, Nakamura proved the following gluing result: 
\begin{theorem}[Nakamura]\label{Pin2Gluing}
    Let $X_1$ be a closed oriented connected 4-manifold with $b_+(X_1) \ge 2$ and a $spin^c$ structure $\mathfrak{s}_1$ so that the formal dimension of the Seiberg-Witten moduli space is 0 and $SW(X_1,\mathfrak{s}_1) = 1 \pmod{2}$. Let $X_2 = {\#}_i (S^2 \times \Sigma_{g_i}) {\#}_j (S^1 \times Y_j)$ with the a local system $\ell$ defined by connect summing the local systems on $S^2 \times \Sigma_{g_i}$ and $S^1 \times Y_j$ defined about. Then for any  $spin^{c-}$ structure $\mathfrak{s}_2$ on $X_2$, we have
    \begin{equation*}
        SW_{Pin^-(2)}(X_1 \# X_2,\mathfrak{s}_1 \# \mathfrak{s}_2) = 1 \pmod{2} 
    \end{equation*}
\end{theorem}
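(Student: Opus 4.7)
The plan is to prove the gluing formula by a neck-stretching argument combined with direct computation of $Pin^-(2)$ moduli spaces on the building block pieces. By induction on the number of summands in $X_2$, the statement reduces to the two base cases $X_1 \# (S^1 \times Y)$ and $X_1 \# (S^2 \times \Sigma_g)$; the inductive step requires verifying that local systems and $spin^{c-}$ structures behave correctly under iterated connect sums, so that a single gluing along $S^3$ reduces the number of summands by one.

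For each base case, I would insert a long cylindrical neck $S^3 \times [-T,T]$ at the connect sum location and analyze the limiting behavior of $Pin^-(2)$ monopoles as $T \to \infty$. Because $\pi_1(S^3) = 0$, the local system restricts trivially to the neck and the $Pin^-(2)$ equations there agree with the ordinary Seiberg-Witten equations; the unique finite-energy solution on the infinite cylinder is the reducible with flat connection and vanishing spinor. A Fredholm gluing argument --- using an approximate solution construction together with an implicit function theorem in the $L^2_k$ configuration space --- then bijects $Pin^-(2)$ monopoles on the stretched connect sum, modulo gauge and for generic perturbation $\mu \in \Omega^+(X,i\lambda)$, with pairs of cylindrical-end monopoles on $X_1$ and on $X_2$ with matching reducible asymptotes. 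On the $X_1$ side the local system is trivial, so $Pin^-(2)$ monopoles agree with Seiberg-Witten monopoles and contribute $SW(X_1, \mathfrak{s}_1) \equiv 1 \pmod 2$.

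The principal obstacle is showing that the contribution of $X_2$ with its twisted local system to the neck-end count is $1 \pmod 2$. For an $S^1 \times Y$ summand with the $S^1$-twisted coefficient, I would exploit $S^1$-invariance to reduce to a three-dimensional problem on $Y$; the twist rules out the zero reducible, and an index computation combined with a direct enumeration should yield a unique gauge orbit. For an $S^2 \times \Sigma_g$ summand with the local system pulled back from $\Sigma_g$, I would use the product K\"{a}hler structure together with a holomorphic description of the $Pin^-(2)$ equations twisted by the flat real line bundle $\lambda$, so that the moduli space becomes a single point for the canonical $spin^{c-}$ structure. Two additional technical points require care: ensuring sufficient transversality in the perturbed $Pin^-(2)$ moduli space despite the lack of orientability (so that a mod-$2$ count is well defined), and establishing uniform energy control to rule out bubbling into the neck, which follows from the Weitzenb\"{o}ck formula for the $Pin^-(2)$ Dirac operator together with uniform scalar curvature bounds on the stretched metric.
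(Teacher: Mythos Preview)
The paper does not prove this theorem at all; it is quoted verbatim as a result of Nakamura, with a citation to \cite{Nakamura15}, and is used as a black box in the subsequent construction of examples. So there is no ``paper's own proof'' to compare against, and strictly speaking no proof is expected of you here.

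That said, your sketch misses the structural point that the paper explicitly flags as the reason the result holds: with the chosen local systems one has $b_+(X_2,\ell)=0$, so the connect sum with $X_2$ behaves like a $Pin^-(2)$ analogue of blowing up rather than like a connect sum with a positive-$b_+$ piece (which in ordinary Seiberg--Witten theory would kill the invariant). In a neck-stretching argument this condition is exactly what makes the obstruction space on the $X_2$ side vanish and forces the relative moduli space to be a single point; your discussion of ``direct enumeration'' on $S^1\times Y$ and a ``holomorphic description'' on $S^2\times\Sigma_g$ is gesturing at this but never isolates $b_+(X_2,\ell)=0$ as the mechanism. Without that, the claim that the $X_2$ contribution is $1\pmod 2$ is an assertion rather than an argument, and the analogy with the ordinary blow-up formula --- where the relevant relative invariant of $\overline{\mathbb{CP}}^2$ is computed once and for all --- is lost. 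If you want to flesh this out, the honest route is to compute $H^+(X_2;\lambda)$ for each building block with its specified local system, observe it vanishes, and then invoke the standard gluing package (reducible limit on $S^3$, Kuranishi model, no obstruction) rather than case-by-case moduli space descriptions.
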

The condition for $X_1$ is satisfied for symplectic manifolds when $\mathfrak{s}_1$ is taken to be the $spin^c$ structure induced by the symplectic form. The reason that it should be considered a $Pin^-(2)$ analog of the blow-up formula is because $b_+(X_2,\ell_{X_2}) = 0$. Note also that 
\begin{equation*}
    c_1^2(\mathfrak{s}_1) = c_1^2(\mathfrak{s}_1 \# \mathfrak{s}_2).
\end{equation*}

We now briefly discuss the strategy for constructing examples in \ref{MainTheorem}. The idea is to use the gluing formula \ref{Pin2SWExistence} and apply \ref{Pin2Estimate}. Let $X_1$ be a closed symplectic manifold with $b^+(X_1) \ge 2$, and let $\mathfrak{s}_1$ be the corresponding canonical $spin^c$ structure. Then it follows that $SW(X_1,\mathfrak{s}_1) = 1$. Pick $X_2$ as in \ref{Pin2SWExistence} so that 
\begin{equation*}
    -\chi(X_2) + 2 \ge \frac{1}{3} (2 \chi(X_1) + 3\sigma(X_1)).
\end{equation*}
Since $c_1^2(\mathfrak{s}_1) = 2\chi(X_1) + 3 \sigma(X_1)$, and $\sigma(X_2)= 0$, it follows that 
\begin{equation*}
    2\chi(X_1 \# X_2) - 3\lvert \sigma(X_1 \# X_2) \rvert \ge c_1^2(\mathfrak{s}_1) + 2 \chi(X_2) -4 \ge \frac{1}{3} c_1^2(\mathfrak{s}_1)
\end{equation*}
Since $c_1^2(\mathfrak{s}_1 \# \mathfrak{s}_2) = c_1^2(\mathfrak{s}_1)$, it follows that we can apply $\ref{Pin2Estimate}$ and deduce \ref{MainTheorem}.

\subsection{Relation with the Seiberg-Witten equations} 
The $Pin^-(2)$ equations on $X$ are related to the standard $U(1)$ Seiberg-Witten equations on $\Tilde{X}$. As with the previous section, for a more detailed exposition, see section 3(iii) of \cite{Nakamura13}. Given a $spin^{c-}$ structure $\mathfrak{s}$ on $X$, there exists a corresponding $spin^c$ structure $\Tilde{\mathfrak{s}}$ on $\Tilde{X}$ defined as follows.
\begin{equation*}
    \begin{tikzcd}
        P \arrow["\Tilde{s}"]{r} \arrow{dr}& \Tilde{X}\arrow["\pi"]{d}\\
        & X
    \end{tikzcd}
\end{equation*}
The isomorphism $s: P/Spin^c \rightarrow \Tilde{X}$ gives us a map $\Tilde{s}: P \rightarrow \Tilde{X}$ that takes the structure of a principal $spin^c$ bundle over $\Tilde{X}$. Since the isomorphism $t$ lifts to an isomorphism $\tilde{t}: P/U(1) \rightarrow Fr(\Tilde{X})$, it follows that this defines a $spin^c$ structure $\Tilde{\mathfrak{s}}$ over $\Tilde{X}$. The characteristic $O(2)$ bundle $E$ over $X$ also lifts into the determinant $U(1)$ bundle $\Tilde{E}$ of $\Tilde{\mathfrak{s}}$ over $\Tilde{X}$. So it also follows that $2c_1^2(\mathfrak{s}) = c_1^2(\tilde{\mathfrak{s}})$. The spinor bundles $\Tilde{S}^{\pm}$ of $\Tilde{\mathfrak{s}}$ have a canonical identification $\pi^*S^{\pm}$, and a $Spin^{c-}$ connection $A$ has a canonical lift to a $Spin^c$ connection $\Tilde{A}$. 

Let $\iota: \Tilde{X} \rightarrow \Tilde{X}$ be the deck transformation of the double covering. Then, the $spin^c$ structure defined by $\iota \circ \Tilde{s}$ will be the conjugate $spin^c$ structure, and therefore $\iota$ is naturally covered by a principal $spin^c$-bundle map $\Tilde{\iota}: \Tilde{\mathfrak{s}} \rightarrow \overline{\Tilde{\mathfrak{s}}}$. Define 
\begin{equation*}
    J = [1,j^{-1}] \in Spin^{c-} = Spin(4) \times_{\pm 1} Pin^-(2).
\end{equation*}
Since $J$ is on the nonidentity component of $Spin^{c-}$, the right action of $J$ on $P \rightarrow X$ covers $\iota$, and it is not hard to see that $J$ is $\Tilde{\iota}$ composed with the complex conjugation map $\Tilde{\mathfrak{s}} \rightarrow \overline{\Tilde{\mathfrak{s}}}$.  

The $J$-action will induce actions on the spinor bundles $\Tilde{S}^\pm = P \times_{spin^c} \Delta^\pm$ by $I(p,\phi) = [pJ,J^{-1}\phi]$. This $I$-action is an antilinear involution of the spinor bundles, and
\begin{equation*}
    S^\pm \cong \Tilde{S}^\pm / I. 
\end{equation*}
Under this identifications, it follows that $\Gamma(S^\pm) \cong \Gamma(\Tilde{S}^\pm)^I$. Similarly, there is also an $I$-action on the space of $U(1)$ connections on the determinant line bundle $L$ of $\Tilde{\mathfrak{s}}$. This is because $J$ passes to an antilinear involution of the determinant line of $\Tilde{\mathfrak{s}}$. It follows that 
\begin{equation*}
    \mathcal{A}(E) = \mathcal{A}(L)^I,
\end{equation*}
where $\mathcal{A}(E)$ are the principal $O(2)$-connections on $E$ and $\mathcal{A}(L)$ are the principal $U(1)$-connections on $L$. Nakamura showed in \cite{Nakamura13} that solutions to the $Pin^-(2)$-monopole equations on $X$ are precisely the $I$-invariant solutions of the usual Seiberg-Witten solutions on $\Tilde{X}$ that are invariant under $J$.
\begin{proposition}[Nakamura]\label{Pin2SWExistence}
    Fix a Riemannian metric $g$ on $X$ and let $\Tilde{g}$ be the covering metric on $\Tilde{X}$. There is a bijective correspondence between the set of $Pin^-(2)$ monopoles on $(X,\mathfrak{s})$ and the set of $I$-invariant Seiberg-Witten monopoles on $(\Tilde{X},\Tilde{\mathfrak{s}})$. Furthermore, there is a canonical identification of moduli spaces
    \begin{equation*}
        \mathcal{M}_{Pin^-(2)}(X,\mathfrak{s}) \cong \mathcal{M}_{U(1)}(\Tilde{X},\Tilde{\mathfrak{s}})^I
    \end{equation*}
\end{proposition}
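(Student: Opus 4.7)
The plan is to upgrade the identifications already noted in the excerpt, namely $\Gamma(S^\pm) \cong \Gamma(\Tilde{S}^\pm)^I$ and $\mathcal{A}(E) = \mathcal{A}(L)^I$, to a correspondence of full configuration spaces, equations, and gauge orbits. First, I would combine these section and connection identifications to get a bijection of configuration spaces $\mathcal{C}(X,\mathfrak{s}) \cong \mathcal{C}(\Tilde{X},\Tilde{\mathfrak{s}})^I$. In parallel, I would identify the gauge groups: sections of the twisted $U(1)$-bundle $\Tilde{X}\times_{\pm 1}U(1) \to X$ are exactly the $\iota$-equivariant (with respect to complex conjugation on $U(1)$) $U(1)$-valued functions on $\Tilde{X}$, so $\mathcal{G}(X,\mathfrak{s}) \cong \mathcal{G}(\Tilde{X},\Tilde{\mathfrak{s}})^I$. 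The action of $J$ is compatible with the gauge action, so this will descend to quotient spaces.

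Next, I would verify that the $Pin^-(2)$-monopole equations on $(X,\mathfrak{s})$ correspond under these bijections to the ordinary Seiberg-Witten equations on $(\Tilde{X},\Tilde{\mathfrak{s}})$. The pullback metric $\Tilde{g} = \pi^* g$ is $\iota$-invariant, and a connection $A\in \mathcal{A}(E)$ lifts to an $I$-invariant $U(1)$-connection $\Tilde{A}$ on $L$. This means $D_{\Tilde{A}}:\Gamma(\Tilde{S}^+) \to \Gamma(\Tilde{S}^-)$ is $I$-equivariant, and its restriction to $I$-invariant spinors coincides, under $\Gamma(S^\pm) \cong \Gamma(\Tilde{S}^\pm)^I$, with the Dirac operator $D_A$ on $X$; thus $D_A\phi = 0$ iff $D_{\Tilde{A}}\Tilde\phi = 0$. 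For the curvature equation, $F_{\Tilde{A}}$ is $I$-invariant as a $2$-form on $\Tilde{X}$ and pushes forward, via the twisted pullback $\pi^*: \Omega^2(X,i\lambda) \to \Omega^2(\Tilde{X},i\mathbb{R})$, to $F_A$. Similarly, the quadratic map $\rho:\Tilde{S}^+ \to i\Omega^+(\Tilde{X},\mathbb{R})$ is $I$-equivariant, so for $\Tilde\phi \in \Gamma(\Tilde{S}^+)^I$ the image $\rho(\Tilde\phi)$ is $I$-invariant and pushes forward to $\rho(\phi) \in \Omega^+(X,i\lambda)$. The two equations therefore match term by term.

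Combining these two steps, a configuration $(A,\phi) \in \mathcal{C}(X,\mathfrak{s})$ solves the $Pin^-(2)$-monopole equations if and only if its lift $(\Tilde{A},\Tilde\phi)\in \mathcal{C}(\Tilde{X},\Tilde{\mathfrak{s}})^I$ solves the standard Seiberg-Witten equations. After passing to gauge orbits, using the identification of gauge groups from the first paragraph and the fact that $J$-action on $\mathcal{C}(\Tilde{X},\Tilde{\mathfrak{s}})$ descends to an involution on $\mathcal{B}(\Tilde{X},\Tilde{\mathfrak{s}})$ whose fixed points are precisely the image of $\mathcal{B}(X,\mathfrak{s})$, the correspondence descends to a canonical homeomorphism $\mathcal{M}_{Pin^-(2)}(X,\mathfrak{s}) \cong \mathcal{M}_{U(1)}(\Tilde{X},\Tilde{\mathfrak{s}})^I$.

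The main technical obstacle will be the careful bookkeeping around the antilinear involution $I$: because $I$ conjugates complex scalars, one has to track the interaction with the Clifford multiplication (which is complex linear), with complex conjugation on the determinant line, and with the factor of $i$ in $F_A \in \Omega^2(X,i\lambda)$. The key point to nail down is that $J \in Spin^{c-}$ acts as $\Tilde\iota$ composed with charge conjugation on $\Tilde{\mathfrak{s}}$; once this compatibility is made precise, the $I$-equivariance of $D_{\Tilde{A}}$, $F_{\Tilde{A}}$, and $\rho$ follow uniformly, and the bijection of moduli spaces is automatic.
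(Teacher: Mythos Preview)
The paper does not provide its own proof of this proposition: it is stated as a result of Nakamura and simply cited to \cite{Nakamura13} (see the sentence immediately preceding the proposition, ``Nakamura showed in \cite{Nakamura13} that \ldots''). There is therefore no in-paper argument to compare your proposal against.

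That said, your outline is the natural one and matches what Nakamura does: use the identifications $\Gamma(S^\pm)\cong\Gamma(\Tilde S^\pm)^I$ and $\mathcal{A}(E)\cong\mathcal{A}(L)^I$ to identify configuration spaces, check $I$-equivariance of the Dirac and curvature maps, and then pass to gauge quotients. One point you should be a bit more careful about is the last step. You assert that the $I$-fixed locus of $\mathcal{B}(\Tilde X,\Tilde{\mathfrak{s}})$ is exactly the image of $\mathcal{B}(X,\mathfrak{s})$, but this requires two nontrivial checks: (i) an $I$-invariant gauge orbit on $\Tilde X$ actually contains an $I$-invariant representative, and (ii) two $I$-invariant configurations which are gauge equivalent on $\Tilde X$ are already equivalent by an $I$-invariant gauge transformation. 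Both reduce to a vanishing statement in the group cohomology $H^1(\mathbb{Z}/2;\,\mathcal{G}(\Tilde X))$, which in this setting one verifies directly (the involution acts on $\mathcal{G}(\Tilde X)=\mathrm{Map}(\Tilde X,U(1))$ by $u\mapsto \overline{\iota^*u}$, and the relevant $1$-cocycles can be trivialized). Without this, the map $\mathcal{M}_{Pin^-(2)}(X,\mathfrak{s})\to\mathcal{M}_{U(1)}(\Tilde X,\Tilde{\mathfrak{s}})^I$ is well defined but surjectivity and injectivity are not yet established.
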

In particular, this implies that if $(X,\mathfrak{s})$ has nonvanishing $Pin^-(2)$ monopole invariant then given any metric $g$ on $X$, the pullback metric $\pi^*g$ on $\Tilde{X}$ will admit solutions to the unperturbed Seiberg-Witten equations for the $spin^c$ structure $\Tilde{\mathfrak{s}}$.

\subsection{Scalar Curvature Estimates}
We are now ready to prove theorem \ref{Pin2Estimate}. 
\begin{proof}
Let $\Tilde{L} = \pi^{-1}L$, and let $g$ be any asymptotically hyperbolic metric on $\overline{X} - L$. Let $g_j$ be a sequence of approximating metrics on $\overline{X}$ constructed in section \ref{MetricApprox}. Then the pullback metric $\Tilde{g} = \pi^*g$ will be an asymptotically hyperbolic metric on $\Tilde{X} - \Tilde{L}$, and the metrics $\Tilde{g}_j = \pi^*g_j$ will be approximating metrics on $\Tilde{X}$ for $\Tilde{X} - \Tilde{L}$. In particular, they will satisfy the same estimates \ref{RicciComputation} \ref{ScalarComputation} \ref{MeanCurvatureComputation} in section \ref{MetricApprox}. Our goal is now to show that there exists a solution $(A,\phi)$ to the unperturbed Seiberg-Witten equations on $\Tilde{X} - \Tilde{L}$ with $[F_A] = c_1(\Tilde{\mathfrak{s}}) \in H^2_{(2)}(\Tilde{X} - \Tilde{L})$. 

Since $\mathfrak{s}$ is a $Pin^-(2)$ basic class, it follows that there exists irreducible solutions $(A_j,\phi_j)$ to the unperturbed $Pin^-(2)$ monopole equations on $(X,g_j)$. As with the Seiberg-Witten case, fix a base connection $A_0$ with $F_{A_0}$ vanishing in a neighborhood of $L$; we can do this by the $Pin^-(2)$ adjunction inequality in \cite{Nakamura15}. By theorem \ref{Pin2SWExistence}, these solutions $(A_j,\phi_j)$ lift to solutions $(\Tilde{A}_j,\Tilde{\phi}_j)$ of the Seiberg-Witten equations on $(\Tilde{X},\Tilde{g}_j)$ on the $spin^c$ structure $\Tilde{\mathfrak{s}}$. It follows that we can apply theorem \ref{SWExistence} to $(\Tilde{A}_j,\Tilde{\phi}_j)$ to obtain a solution to the usual Seiberg-Witten equations on $(\Tilde{X} - \Tilde{L},\Tilde{g})$. In particular, we have the estimate:
\begin{equation*}
    \int_{\Tilde{X} - \Tilde{L}} s_{\Tilde{g}}^2 dvol_{\Tilde{g}} \ge \frac{1}{3} c_1^2(\Tilde{\mathfrak{s}}).
\end{equation*}
Since both these quantities are multiplicative under covers, we get corresponding estimates
\begin{equation*}
    \int_X s_g^2 dvol_g \ge \frac{1}{3} c_1^2(\mathfrak{s}).
\end{equation*}
It follows as in the proof of theorem \ref{SWEstimate} that if
\begin{equation*}
        2\chi(\overline{X}) - 3\lvert \sigma(\overline{X}) \rvert \le \frac{1}{3} c_1^2(\mathfrak{s})
\end{equation*}
then $X$ will not admit any asymptotically hyperbolic Einstein metrics.
\end{proof}
\bibliography{refs}

\begin{thebibliography}{{Di }13}

\bibitem[And06]{Anderson06}
Michael~T. Anderson.
\newblock Dehn filling and {E}instein metrics in higher dimensions.
\newblock {\em J. Differential Geom.}, 73(2):219--261, 2006.
\newblock URL: \url{http://projecteuclid.org/euclid.jdg/1146169911}.

\bibitem[Bam12]{Bamler12}
Richard~H. Bamler.
\newblock Construction of einstein metrics by generalized dehn filling.
\newblock {\em J. Eur. Math. Soc.}, 14:887--909, 2012.

\bibitem[Bes87]{Besse}
Arthur~L. Besse.
\newblock {\em Einstein manifolds}, volume~10 of {\em Ergebnisse der Mathematik
  und ihrer Grenzgebiete (3) [Results in Mathematics and Related Areas (3)]}.
\newblock Springer-Verlag, Berlin, 1987.
\newblock \href {https://doi.org/10.1007/978-3-540-74311-8}
  {\path{doi:10.1007/978-3-540-74311-8}}.

\bibitem[BH23]{BaykurHamada23}
R.~Inanc Baykur and Noriyuki Hamada.
\newblock Exotic 4-manifolds with signature zero, 2023.
\newblock \href {http://arxiv.org/abs/2305.10908} {\path{arXiv:2305.10908}}.

\bibitem[Biq97]{Biquard97}
Olivier Biquard.
\newblock M\'etriques d'einstein \`a cusps et \'equations de seiberg-witten.
\newblock {\em J. Reine Angew. Math.}, 490:129--154, 1997.

\bibitem[BK05]{BraungardtKotschick05}
V.~Braungardt and D.~Kotschick.
\newblock Einstein metrics and the number of smooth structures on a
  four-manifold.
\newblock {\em Topology}, 44(3):641--659, 2005.
\newblock \href {https://doi.org/10.1016/j.top.2004.11.001}
  {\path{doi:10.1016/j.top.2004.11.001}}.

\bibitem[DC11]{DiCerboThesis}
Luca~Fabrizio Di~Cerbo.
\newblock {\em Aspects of the {S}eiberg-{W}itten equations on manifolds with
  cusps}.
\newblock ProQuest LLC, Ann Arbor, MI, 2011.
\newblock Thesis (Ph.D.)--State University of New York at Stony Brook.
\newblock URL:
  \url{http://gateway.proquest.com/openurl?url_ver=Z39.88-2004&rft_val_fmt=info:ofi/fmt:kev:mtx:dissertation&res_dat=xri:pqdiss&rft_dat=xri:pqdiss:3458214}.

\bibitem[{Di }11]{DiCerbo11}
Luca~F. {Di Cerbo}.
\newblock Seiberg–witten equations on certain manifolds with cusps.
\newblock {\em New York J. Math.}, 17:491--512, 2011.

\bibitem[{Di }13]{DiCerbo13}
Luca~F. {Di Cerbo}.
\newblock Seiberg-witten equations on surfaces of logarithmic general type.
\newblock {\em Internat. J. Math.}, 24(9), 2013.

\bibitem[DRG02]{DelRioGuerra}
Heberto Del Rio~Guerra.
\newblock Seiberg-{W}itten invariants of nonsimple type and {E}instein metrics.
\newblock {\em Ann. Global Anal. Geom.}, 21(4):319--339, 2002.
\newblock \href {https://doi.org/10.1023/A:1015621002823}
  {\path{doi:10.1023/A:1015621002823}}.

\bibitem[DW07]{DaiWei07}
Xianzhe Dai and Guofang Wei.
\newblock Hitchin–thorpe inequality for noncompact einstein 4-manifolds.
\newblock {\em Adv. Math.}, 214:551--570, 2007.

\bibitem[Gaf54]{Gaffney54}
Matthew~P. Gaffney.
\newblock A special stokes’s theorem for complete riemannian manifolds.
\newblock {\em Ann. of Math.}, 60(2):140--145, 1954.

\bibitem[Hit74]{Hitchin74}
Nigel Hitchin.
\newblock Compact four-dimensional {E}instein manifolds.
\newblock {\em J. Differential Geometry}, 9:435--441, 1974.
\newblock URL: \url{http://projecteuclid.org/euclid.jdg/1214432419}.

\bibitem[IL02]{LeBrunIshida}
Masashi Ishida and Claude LeBrun.
\newblock Spin manifolds, {E}instein metrics, and differential topology.
\newblock {\em Math. Res. Lett.}, 9(2-3):229--240, 2002.
\newblock \href {https://doi.org/10.4310/MRL.2002.v9.n2.a9}
  {\path{doi:10.4310/MRL.2002.v9.n2.a9}}.

\bibitem[IRT05]{IvansicDubravkoRatcliffe05}
Dubravko Ivan\v{s}i\'{c}, John~G. Ratcliffe, and Steven~T. Tschantz.
\newblock Complements of tori and {K}lein bottles in the 4-sphere that have
  hyperbolic structure.
\newblock {\em Algebr. Geom. Topol.}, 5:999--1026, 2005.

\bibitem[Ish12]{Ishida12}
Masashi Ishida.
\newblock Einstein metrics and exotic smooth structures.
\newblock {\em Pacific J. Math.}, 258(2):327--348, 2012.
\newblock \href {https://doi.org/10.2140/pjm.2012.258.327}
  {\path{doi:10.2140/pjm.2012.258.327}}.

\bibitem[Iva04]{Ivansic04}
Dubravko Ivan\v{s}i\'{c}.
\newblock Hyperbolic structure on a complement of tori in the 4-sphere.
\newblock {\em Adv. Geom.}, 4(1):119--139, 2004.
\newblock \href {https://doi.org/10.1515/advg.2004.002}
  {\path{doi:10.1515/advg.2004.002}}.

\bibitem[Kot98]{Kotschick98}
Dieter Kotschick.
\newblock Einstein metrics and smooth structures.
\newblock {\em Geom. Topol.}, 2:1--10, 1998.

\bibitem[Kro99]{Kronheimer99}
P.~B. Kronheimer.
\newblock Minimal genus in {$S^1\times M^3$}.
\newblock {\em Invent. Math.}, 135(1):45--61, 1999.
\newblock \href {https://doi.org/10.1007/s002220050279}
  {\path{doi:10.1007/s002220050279}}.

\bibitem[LeB95]{LeBrun95}
Claude LeBrun.
\newblock Einstein metrics and {M}ostow rigidity.
\newblock {\em Math. Res. Lett.}, 2(1):1--8, 1995.
\newblock \href {https://doi.org/10.4310/MRL.1995.v2.n1.a1}
  {\path{doi:10.4310/MRL.1995.v2.n1.a1}}.

\bibitem[LeB96]{LeBrun96}
Claude LeBrun.
\newblock Four-manifolds without {E}instein metrics.
\newblock {\em Math. Res. Lett.}, 3(2):133--147, 1996.
\newblock \href {https://doi.org/10.4310/MRL.1996.v3.n2.a1}
  {\path{doi:10.4310/MRL.1996.v3.n2.a1}}.

\bibitem[LeB01]{LeBrun01}
Claude LeBrun.
\newblock Ricci curvature, minimal volumes, and {S}eiberg-{W}itten theory.
\newblock {\em Invent. Math.}, 145(2):279--316, 2001.
\newblock \href {https://doi.org/10.1007/s002220100148}
  {\path{doi:10.1007/s002220100148}}.

\bibitem[LR00]{LongReid00}
D.~D. Long and A.~W. Reid.
\newblock On the geometric boundaries of hyperbolic {$4$}-manifolds.
\newblock {\em Geom. Topol.}, 4:171--178, 2000.
\newblock \href {https://doi.org/10.2140/gt.2000.4.171}
  {\path{doi:10.2140/gt.2000.4.171}}.

\bibitem[Mor96]{MorganBook}
John Morgan.
\newblock {\em The Seiberg-Witten Equations and Applications to the Topology of
  Smooth Four-Manifolds}.
\newblock Princeton University Press, 1996.

\bibitem[MP90]{MazzeoPhillips90}
Rafe Mazzeo and Ralph~S. Phillips.
\newblock Hodge theory on hyperbolic manifolds.
\newblock {\em Duke Math. J.}, 60(2):509--559, 1990.

\bibitem[Nak13]{Nakamura13}
Nobuhiro Nakamura.
\newblock {$\rm{Pin}^-(2)$}-monopole equations and intersection forms with
  local coefficients of four-manifolds.
\newblock {\em Math. Ann.}, 357(3):915--939, 2013.

\bibitem[Nak15]{Nakamura15}
Nobuhiro Nakamura.
\newblock {$\rm{Pin}^-(2)$}-monopole invariants.
\newblock {\em J. Differential Geom.}, 101(3):507--549, 2015.

\bibitem[Par07]{Park07}
Jongil Park.
\newblock The geography of symplectic 4-manifolds with an arbitrary fundamental
  group.
\newblock {\em Proc. Amer. Math. Soc.}, 135(7):2301--2307, 2007.
\newblock \href {https://doi.org/10.1090/S0002-9939-07-08818-1}
  {\path{doi:10.1090/S0002-9939-07-08818-1}}.

\bibitem[Per81]{Persson81}
Ulf Persson.
\newblock Chern invariants of surfaces of general type.
\newblock {\em Compositio Math.}, 43(1):3--58, 1981.
\newblock URL: \url{http://www.numdam.org/item?id=CM_1981__43_1_3_0}.

\bibitem[RU15]{RoulleauUrzua15}
Xavier Roulleau and Giancarlo Urz\'{u}a.
\newblock Chern slopes of simply connected complex surfaces of general type are
  dense in {$[2,3]$}.
\newblock {\em Ann. of Math. (2)}, 182(1):287--306, 2015.
\newblock \href {https://doi.org/10.4007/annals.2015.182.1.6}
  {\path{doi:10.4007/annals.2015.182.1.6}}.

\bibitem[Sar18]{Saratchandran18}
Hemanth Saratchandran.
\newblock Finite volume hyperbolic complements of 2-tori and klein bottles in
  closed smooth simply connected 4-manifolds.
\newblock {\em New York J. Math.}, 24:443--450, 2018.

\bibitem[TU23]{TroncosoUrzua23}
Sergio Troncoso and Giancarlo Urz\'{u}a.
\newblock Savage surfaces.
\newblock {\em J. Eur. Math. Soc. (JEMS)}, 25(10):4205--4219, 2023.
\newblock \href {https://doi.org/10.4171/jems/1284}
  {\path{doi:10.4171/jems/1284}}.

\bibitem[Zuc83]{Zucker82}
Steven Zucker.
\newblock {$L\sb{2}$} cohomology of warped products and arithmetic groups.
\newblock {\em Invent. Math.}, 70(2):169--218, 1982/83.

\end{thebibliography}

\end{document}